\newcommand{\R}{\mathbb{R}}
\newcommand{\NN}{\mathbb{N}}
\newcommand\C{\mathbb C}
\newcommand{\rst}[1]{\ensuremath{{\mathbin\upharpoonright}%
\raise-.5ex\hbox{$#1$}}}
\newtheorem{thm}{Theorem}
\newtheorem{lem}[thm]{Lemma}
\theoremstyle{definition}
\theoremstyle{remark}
\newtheorem{rem}[thm]{Remark}
\newtheorem{conjecture}[thm]{Conjecture}
\newcommand{\N}{\mathbb{N}}
\begin{document}
  \title[]{On a nonlinear Schr\"odinger  equation for nucleons in one space dimension}

\author{Christian Klein}
\address{Institut de Math\'ematiques de Bourgogne, UMR 5584\\
                Universit\'e de Bourgogne-Franche-Comt\'e, 9 avenue Alain Savary, 21078 Dijon
                Cedex, France\\
    E-mail Christian.Klein@u-bourgogne.fr}

\author{Simona Rota Nodari}
\address{Institut de Math\'ematiques de Bourgogne, UMR 5584\\
                Universit\'e de Bourgogne-Franche-Comt\'e, 9 avenue Alain Savary, 21078 Dijon
                Cedex, France\\
    E-mail simona.rota-nodari@u-bourgogne.fr}
\date{\today}

\begin{abstract}
    We study a 1D nonlinear 
    Schr\"odinger equation appearing in the description of a particle 
    inside an atomic nucleus. For various nonlinearities, the 
    ground states are discussed and given in explicit form. Their stability is 
    studied numerically via the time evolution of perturbed ground 
    states. In the time evolution of general localized initial data, 
    they are shown to appear in the long time behaviour of certain 
    cases. 
\end{abstract}

 
\thanks{We thank the anonymous referees for useful comments and for a 
careful reading of the manuscript. This work is partially supported by 
the ANR-FWF project ANuI - ANR-17-CE40-0035, the ANR project DYRAQ - ANR-17-CE40-0016, the isite BFC project 
NAANoD, the ANR-17-EURE-0002 EIPHIby the 
European Union Horizon 2020 research and innovation program under the 
Marie Sklodowska-Curie RISE 2017 grant agreement no. 778010 IPaDEGAN 
and the EITAG project funded by the FEDER de Bourgogne, the region 
Bourgogne-Franche-Comt\'e and the EUR EIPHI}
\maketitle

\section{Introduction}


%

This paper is concerned with the study of solutions to a
nonlinear Schr\"odinger (NLS) type equation which, 
in a specific non-relativistic limit proper to nuclear physics, 
describes the behavior of a particle inside the atomic nucleus. This equation is, at least formally (see Appendix~\ref{formalderivation}), deduced from a relativistic model involving a Dirac operator and, in space dimension $d=1$, is given by 

\begin{equation}\label{eqmodtimealpha}
i\partial_t\phi=-\partial_x\left(\frac{\partial_x\phi}{1-|\phi|^{2\alpha}}\right)+\alpha|\phi|^{2\alpha-2}\frac{|\partial_x\phi|^2}{(1-|\phi|^{2\alpha})^2}\phi-a|\phi|^{2\alpha}\phi
\end{equation} 
where $\phi\in L^2(\R,\C)$ is a function that describes the quantum 
state of a nucleon (a proton or a neutron), $\alpha\in \N^*$ is a strictly 
positive integer and  $a>0$ is a parameter of the model. 
Note that equation (\ref{eqmodtimealpha}) is Hamiltonian and has a 
conserved energy
\begin{equation}
      E[\phi] = 
      \int_{\mathbb{R}}^{}\frac{|\partial_{x}\phi|^2}{1-|\phi|^{2\alpha}}-\frac{a}{\alpha+1}|\phi|^{2\alpha+2}.
 \label{E}
\end{equation}

Solitary wave solutions for this equation can be constructed by 
taking $\phi(t,x)=e^{ibt}\varphi(x)$ with $\varphi$ a real positive 
square integrable solution to the stationary equation 
\begin{equation}\label{eqmodstat}
-\left(\frac{\varphi'}{1-\varphi^{2\alpha}}\right)'+\alpha\frac{( \varphi')^2}{(1-\varphi^{2\alpha})^2}\varphi^{2\alpha-1}-a\varphi^{2\alpha+1}+b\varphi=0.
\end{equation}
The reasoning that the solution can be chosen to be real is the same as for the standard NLS equation.

Positive square integrable solutions of~\eqref{eqmodstat} can be seen as ground state solutions of~\eqref{eqmodtimealpha} since they are minimizers of~\eqref{E} among all the functions belonging to 
\begin{equation}
X=\left\{\varphi\in L^2(\R), \int_{\R}\frac{|\varphi'|^2}{(1-|\varphi|^{2\alpha})_+}<+\infty, \int_{\R} |\varphi|^2=1 \right\}\subset H^1(\R)
\end{equation}
where $f_+$ denotes the positive part of any function $f$. As shown in~\cite{EstRot-13}, this is the appropriate way to define ground states for this energy. Indeed, on the one hand, by adapting the arguments of~\cite{EstRot-13}, it can been shown that the energy $E$ is not bounded from below in the set $\left\{\varphi\in H^1(\R), \int_{\R} |\varphi|^2=1 \right\}$. On the other hand, if $\varphi\in X$, one can show (see~~\cite{EstRot-13}) that $|\varphi|^2\le 1$ a.e. in $\R$. As a consequence, for any $\varphi\in X$, $E[\varphi]\ge -\frac{a}{\alpha+1}$.

In this paper 
we will prove the existence of solitary wave solutions 
to~\eqref{eqmodtimealpha} for any value of $\alpha\in \N^*$, give 
them in explicit form, and study numerically their stability as well as the 
time evolution of more general initial data with $|\phi|<1$.

In the physical literature, the most relevant case is given by $\alpha=1$ which leads to the cubic nonlinear Schrödinger type equation

\begin{equation}\label{eqmodtime}
i\partial_t\phi=-\partial_x\left(\frac{\partial_x\phi}{1-|\phi|^2}\right)+\frac{|\partial_x\phi|^2}{(1-|\phi|^2)^2}\phi-a|\phi|^2\phi.
\end{equation}  

Nevertheless, it could be mathematically interesting to investigate also the 
behavior of solutions for other power nonlinearities as for example 
the quintic nonlinearity ($\alpha=2$) which corresponds to the $L^2$ 
critical case for the usual NLS equation. For the latter equation, it 
is known that initial data with a mass larger than the ground state 
can blow up in finite time, see for instance \cite{sul} and 
references therein.

To our knowledge, the above model was mathematically studied for the first time in \cite{EstRot-12}, where M.J. Esteban and S. Rota Nodari consider the equation
\begin{equation}\label{eqmodstatanydim}
-\nabla \cdot \left(\frac{\nabla \varphi}{1-\varphi^{2}}\right)+\frac{|\nabla \varphi|^2}{(1-\varphi^{2})^2}\varphi-a\varphi^{3}+b\varphi=0,
\end{equation}
which is the generalization of~\eqref{eqmodstat} for any spatial 
dimension $d\ge1$ and for $\alpha=1$. In particular, the existence of 
real positive radial square integrable solutions has been shown whenever $a>2b$. Note that solutions to~\eqref{eqmodstatanydim} do not have a 
simple scaling property in the parameter $b$ as ground states for the 
standard NLS equation. This makes it necessary to study several 
values of $b$ in this context. 

This result has then been generalized in \cite{TreRot-13}, where the 
existence of infinitely many square-integrable excited states 
(solutions with an arbitrary but finite number of sign changes) 
of~\eqref{eqmodstatanydim} was shown in dimension $d\ge 2$.


In \cite{EstRot-13} (see also \cite{LewRot-15}), using a variational 
approach the existence of solutions to~\eqref{eqmodstatanydim}  is proved 
without considering any particular ansatz for the  wave function of 
the nucleon and for a large range of values for the parameter $a$. Finally, in \cite{LewRot-15}, M. Lewin and S. Rota Nodari proved the uniqueness, modulo translations and multiplication by a phase factor, and the non-degeneracy of the positive solution to~\eqref{eqmodstatanydim}. The proof of this result is based on the remark that equation~\eqref{eqmodstatanydim} can be written in terms of $u=\arcsin(\varphi)$ as simpler nonlinear Schrödinger equation. 

The same can be done for~\eqref{eqmodstat}. Indeed, by taking $u:=\arcsin(\varphi^{\alpha})$, one obtains
\begin{equation}\label{eqmodstatu}
-u''-a\alpha\sin(u)^3\cos(u)+b\alpha\sin(u)\cos(u)
-\left(\frac{1}{\alpha}-1\right)(\partial_{x}u)^{2}\cot u=0.
\end{equation}

In Appendix~\ref{appexistencesolstat}, we generalize the results of~\cite{LewRot-15} for any $\alpha\in \N^*$ in spatial dimension $1$ by proving the following theorem. 

\begin{thm}\label{existence} Let $\alpha\in \N^*$. The nonlinear equation \eqref{eqmodstat} has no non-trivial solution $0\le \varphi <1$ such that $\lim_{x\to\pm\infty}\varphi(x)=0$ when $0<a\le (\alpha+1)b$. For $a>(\alpha+1)b>0$, the nonlinear equation \eqref{eqmodstat} has a unique solution $0<\varphi<1$ that tends to $0$ at $\pm \infty$, modulo translations. This solution is given by 
\begin{equation}\label{solstatexplicitthm}
\varphi(x)=\left(\frac{1}{2}\left(\frac{a}{(\alpha+1)b}+1\right)+\frac{1}{2}\left(\frac{a}{(\alpha+1)b}-1\right)\cosh({2\alpha\sqrt{b}x})\right)^{-\frac{1}{2\alpha}}.
\end{equation}
In particular, the following holds
\begin{enumerate}
\item $\varphi\in\mathcal C^{1}(\R)$;
\item $\varphi(x)=\varphi(-x)$;
\item $\varphi'(x)<0$ for all $x>0$;
\item $\varphi(x)\sim_{x\to+\infty}Ce^{-\sqrt{b}x}$;
\item $\varphi$ is non-degenerate.
\end{enumerate}
\end{thm}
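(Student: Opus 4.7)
The plan is to treat \eqref{eqmodstat} as an autonomous second-order ODE on the line and exploit its Hamiltonian structure. Since \eqref{eqmodstat} is the Euler--Lagrange equation of the mechanical Lagrangian
$$L(\varphi,\varphi') = \frac{(\varphi')^2}{1-\varphi^{2\alpha}} - \frac{a}{\alpha+1}\varphi^{2\alpha+2} + b\varphi^2,$$
the associated Jacobi integral $H = \varphi'\,\partial_{\varphi'}L - L$ is conserved along solutions, and the decay condition $\varphi(\pm\infty)=0$ forces $H\equiv 0$. This reduces the problem to the first-order identity
$$(\varphi')^2 = \varphi^2\,(1-\varphi^{2\alpha})\left(b - \tfrac{a}{\alpha+1}\varphi^{2\alpha}\right),$$
from which the parameter dichotomy, the explicit formula, and properties (i)--(iv) can all be read off directly; non-degeneracy (v) requires an additional linearization argument that I would perform after the change of unknown $u:=\arcsin(\varphi^\alpha)$.

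From the first-order identity the dichotomy is immediate. A nontrivial decaying profile with $0\le\varphi<1$ must attain a positive maximum where $\varphi'=0$, $\varphi\ne 0$ and $1-\varphi^{2\alpha}>0$, so the identity forces $\varphi_{\max}^{2\alpha}=(\alpha+1)b/a$; this is compatible with $\varphi<1$ if and only if $a>(\alpha+1)b$, which gives non-existence in the complementary regime. For $a>(\alpha+1)b$ I would integrate by separation of variables: setting $w=\varphi^{-2\alpha}$ and $A=a/((\alpha+1)b)>1$, the radicand becomes $(w-1)(w-A)/w^2$, and the elementary quadrature
$$\int\frac{dw}{\sqrt{(w-1)(w-A)}} = 2\alpha\sqrt{b}\,x,$$
computed via the completion of squares $(w-1)(w-A)=(w-(A+1)/2)^2-((A-1)/2)^2$ and a $\cosh$-substitution, produces exactly \eqref{solstatexplicitthm}. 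Centering the profile at its maximum fixes the translation and hence gives uniqueness. Properties (i)--(iv) then follow from direct inspection of the explicit formula: $\mathcal C^1$ regularity and evenness are manifest, monotonicity on $x>0$ uses that $(A-1)/2>0$ multiplies the strictly increasing $\cosh(2\alpha\sqrt b\,x)$, and the asymptotic $\varphi(x)\sim Ce^{-\sqrt b\,x}$ follows from $\cosh(2\alpha\sqrt b\,x)\sim \tfrac12 e^{2\alpha\sqrt b\,x}$.

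The main obstacle is the non-degeneracy statement (v). I would follow the strategy of Lewin--Rota Nodari \cite{LewRot-15}: passing to the unknown $u:=\arcsin(\varphi^\alpha)$ rewrites \eqref{eqmodstat} as the simpler semilinear equation \eqref{eqmodstatu}, and one linearizes there rather than around \eqref{eqmodstat}. The linearized operator becomes, after a suitable weight, a one-dimensional Schr\"odinger-type operator on $\R$; Sturm oscillation theory on the line then forces its kernel to be one-dimensional, because the translation mode is a zero-energy eigenfunction with exactly one sign change and hence corresponds to the second eigenvalue, which must therefore be simple and equal to zero. The delicate step, and the place where the argument of \cite{LewRot-15} genuinely needs to be extended beyond $\alpha=1$, is to handle the quasilinear term $(1/\alpha-1)(u')^2\cot u$ when performing the linearization (in particular to obtain a self-adjoint operator with the appropriate weight and to control the behaviour at $u=0$), and then to transfer the non-degeneracy back to the original variable so that the kernel of the linearization of \eqref{eqmodstat} is spanned by $\varphi'$.
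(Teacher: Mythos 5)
Your treatment of existence, non-existence, uniqueness and of properties (i)--(iv) is correct and is essentially the paper's own route: the Jacobi integral you write down is exactly the first integral \eqref{eqmodstatint} (equivalently, the zero level set of the Hamiltonian \eqref{energysys} of the first-order system \eqref{eqstatsys}); the forced value $\varphi_{\max}^{2\alpha}=(\alpha+1)b/a$ at a critical point is precisely what drives both the non-existence dichotomy and the uniqueness argument in the paper; and the substitution $w=\varphi^{-2\alpha}$ with the $\cosh$ quadrature is literally \eqref{chieq}--\eqref{chisol}. (Your version even treats the borderline case $a=(\alpha+1)b$ and the case $a<(\alpha+1)b$ uniformly, where the paper uses two separate lemmas.) Two small points to make explicit: the constant of motion vanishes because $H$ is constant and $\varphi\to0$ force $(\varphi')^2\to H$, which is incompatible with decay unless $H=0$; and ``centering at the maximum'' yields uniqueness only after you invoke Cauchy--Lipschitz for the Cauchy data $\bigl(\varphi(x_0),\varphi'(x_0)\bigr)=\bigl(((\alpha+1)b/a)^{1/2\alpha},0\bigr)$, which is legitimate since $1-\varphi^{2\alpha}>0$ there keeps the equation regular.

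The genuine gap is item (v). For the non-degeneracy you only offer a plan --- pass to $u=\arcsin(\varphi^\alpha)$, symmetrize, invoke Sturm oscillation --- and you yourself flag, without resolving, the obstruction: for $\alpha\ge2$ equation \eqref{eqmodstatu} retains the quasilinear term $(1/\alpha-1)(u')^2\cot u$, so the linearization is not a Schr\"odinger operator, the weight needed to make it self-adjoint is not produced, and the singular factor $\cot u$ near $u=0$ is not controlled; as written, the oscillation argument does not apply. The paper closes this step by a different and more elementary argument valid for every $\alpha$, which you should adopt (or else actually carry out the weighted Sturm analysis, which is substantially more work): linearize the first-order system \eqref{eqstatsys} to obtain \eqref{eqstatsyslin}; note that $(\chi',\varphi')$ solves this linear system and that the system is trace-free, so for any kernel element $(u,v)$ the Wronskian $u\varphi'-v\chi'$ is constant. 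Evaluating at the maximum $x_0$, where $\varphi'(x_0)=0$ and $\chi'(x_0)=-\alpha b\,\varphi(x_0)\neq0$, the constant equals $\alpha b\,\varphi(x_0)v(x_0)$; decay of $(u,v)$ at infinity forces it to vanish, hence $v(x_0)=0$, the two solutions are linearly dependent, and $\ker L=\mathrm{span}\{\varphi'\}$.
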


The paper is organized as follows: In Section~\ref{secgroundstates} we derive the explicit form of solutions to~\eqref{eqmodstat} for any $\alpha\in \N^*$ whenever $a>(\alpha+1)b>0$, and we show their behavior for various values of the parameters. The computation presented in Section~\ref{secgroundstates} is justified in the Appendix~\ref{appexistencesolstat} where the proof of Theorem~\ref{existence} is done.
In Section 3, we outline the numerical approach for the 
time evolution of initial data according to (\ref{eqmodtimealpha}). 
This code is applied to perturbations of the ground states for 
various values of the nonlinearity parameter $\alpha$ and for initial 
data from the Schwartz class of rapidly decreasing functions. In Section~\ref{secoutlook}, we discuss the generalization of the model in higher space dimension. Finally, a formal 
derivation of the equation~\eqref{eqmodtimealpha} is presented in Appendix~\ref{formalderivation}. 

\section{Ground states}\label{secgroundstates}
In this section we construct ground state solutions to the 
equation (\ref{eqmodtimealpha}) and show some examples for different values of the parameters. 

First of all, equation (\ref{eqmodstat}) can be integrated once to give
\begin{equation}
    -\frac{(\varphi')^{2}}{1-\varphi^{2\alpha}}-\frac{a}{\alpha+1}\varphi^{2\alpha+2}+b\varphi^{2}=0
    \label{eqmodstatint},
\end{equation}
where we have used the asymptotic behavior of $\varphi$ for 
$x\to\infty$. Putting $\psi:=\varphi^{-2\alpha}$, we get from 
(\ref{eqmodstatint}), 
\begin{equation}
    (\psi')^{2}=4\alpha^{2}b\left(\psi-\frac{a}{(\alpha+1)b}\right)(\psi-1)
    \label{chieq},
\end{equation}
which has for $a\neq(\alpha+1)b$ the solution
\begin{equation}
    \psi(x)=\frac{1}{2}\left(1+\frac{a}{(\alpha+1)b}\right)+
    \frac{1}{2}\left|1-\frac{a}{(\alpha+1)b}\right|\cosh(2\alpha 
    \sqrt{b}(x-x_{0}))
    \label{chisol}.
\end{equation}
Here $x_{0}$ is an integration constant reflecting the translation 
invariance in $x$ of the ground state and $\psi(x_0)$ is chosen in 
order to have a $\mathcal C^1$ solution to~\eqref{chieq} defined for any $x\in\R$. Using the translation invariance, we will assume in the following that the maximum of the solution is at $x=0$, and then we put $x_{0}=0$. 

The solution to equation (\ref{chieq}) for $a=(\alpha+1)b$ leading to the 
wanted asymptotic behavior of $\varphi$ will not be globally 
differentiable. 

For $a<(\alpha+1)b$, one has $\psi(0)=1$ and thus 
$\varphi(0)=1$. This would lead to a vanishing denominator in 
(\ref{eqmodstatint}). As a consequence, $\varphi'(0)$ has to be equal to $0$. This contradicts equation~\eqref{eqmodstatint} since $a<(\alpha+1)b$.

 
Summing up, with (\ref{chisol}) we get the ground states for 
$0<(\alpha+1)b<a$ in the form 
\begin{equation}
    \varphi (x)= \left[\frac{1}{2}\left(1+\frac{a}{(\alpha+1)b}\right)+
    \frac{1}{2}\left(\frac{a}{(\alpha+1)b}-1\right)\cosh(2\alpha 
    \sqrt{b}x)\right]^{-\frac{1}{2\alpha}}.
    \label{gsexplicit}
\end{equation}

Let us point out that this construction will be further justified in Appendix~\ref{appexistencesolstat} where Theorem~\ref{existence} is proven.

As a concrete example we show the  solutions (\ref{gsexplicit})  for 
$a=9$ and various values of $b<a/(\alpha+1)$. 
The solutions for $\alpha=1$ can be seen in Fig.~\ref{figGS}. 
With $b\to a/2$, the solutions become broader and broader and have a 
larger maximum. The peak near 1 becomes also flatter. For $b=4.499$, 
the maximum is roughly at $0.9999$ and almost touches on some 
interval the line 1. 

\begin{figure}[htb!]
  \includegraphics[width=0.8\textwidth]{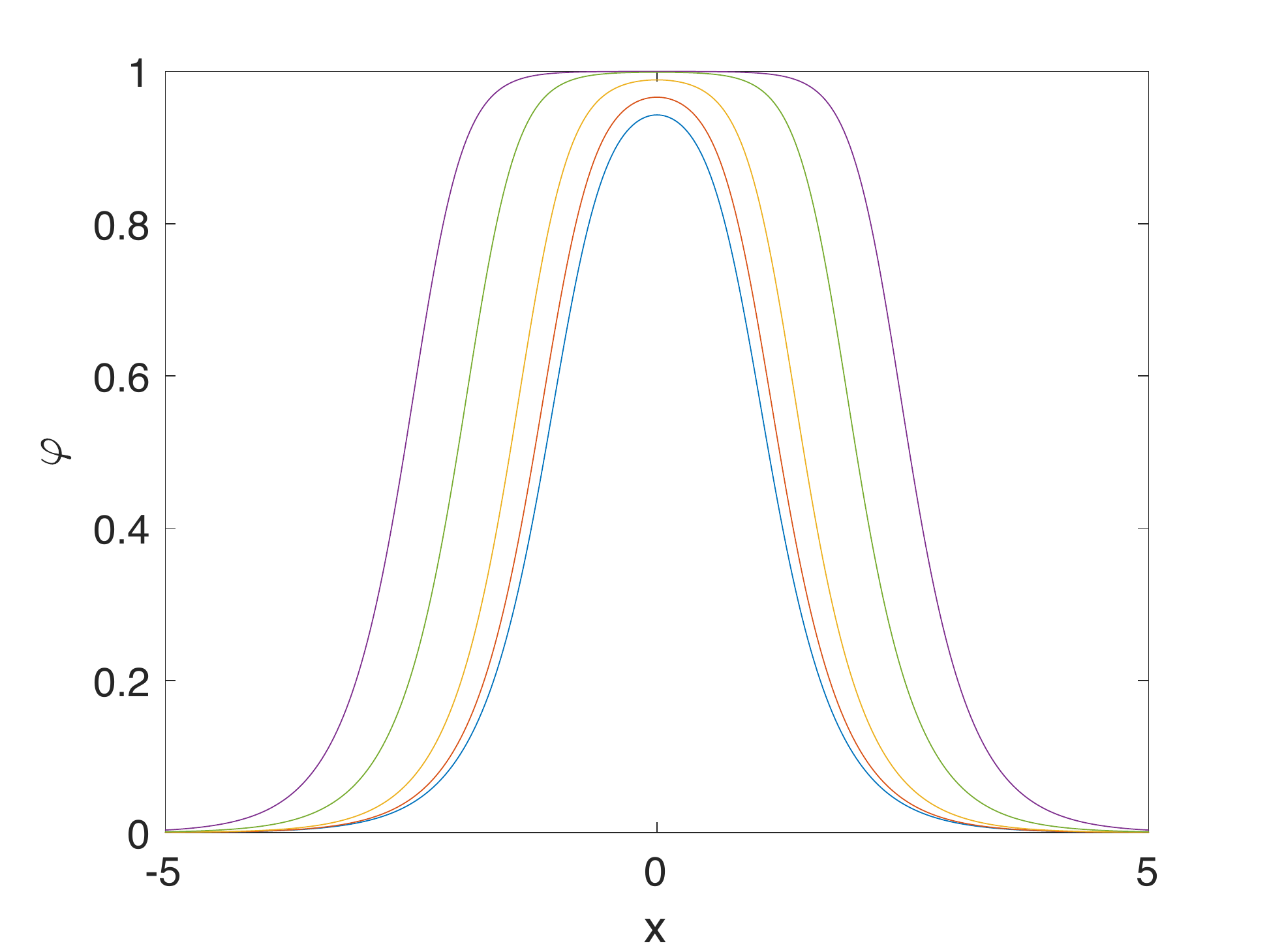}
  \caption{Ground state solution to equation (\ref{eqmodstat}) for 
  $\alpha=1$, $a=9$ and $b=4.0$, $4.2$, $4.4$, $4.49$ and $4.499$ (from bottom to 
  top). }
 \label{figGS}
\end{figure}

For 
$\alpha=2,3$ we get in the same way the figures in Fig.~\ref{figGS23}. It can 
be seen that the higher nonlinearity has a tendency to lead to more 
compressed peaks as in \cite{AKS}. But due to the missing scaling 
invariance of the ground states here, it is difficult to compare them. 
\begin{figure}[htb!]
  \includegraphics[width=0.49\textwidth]{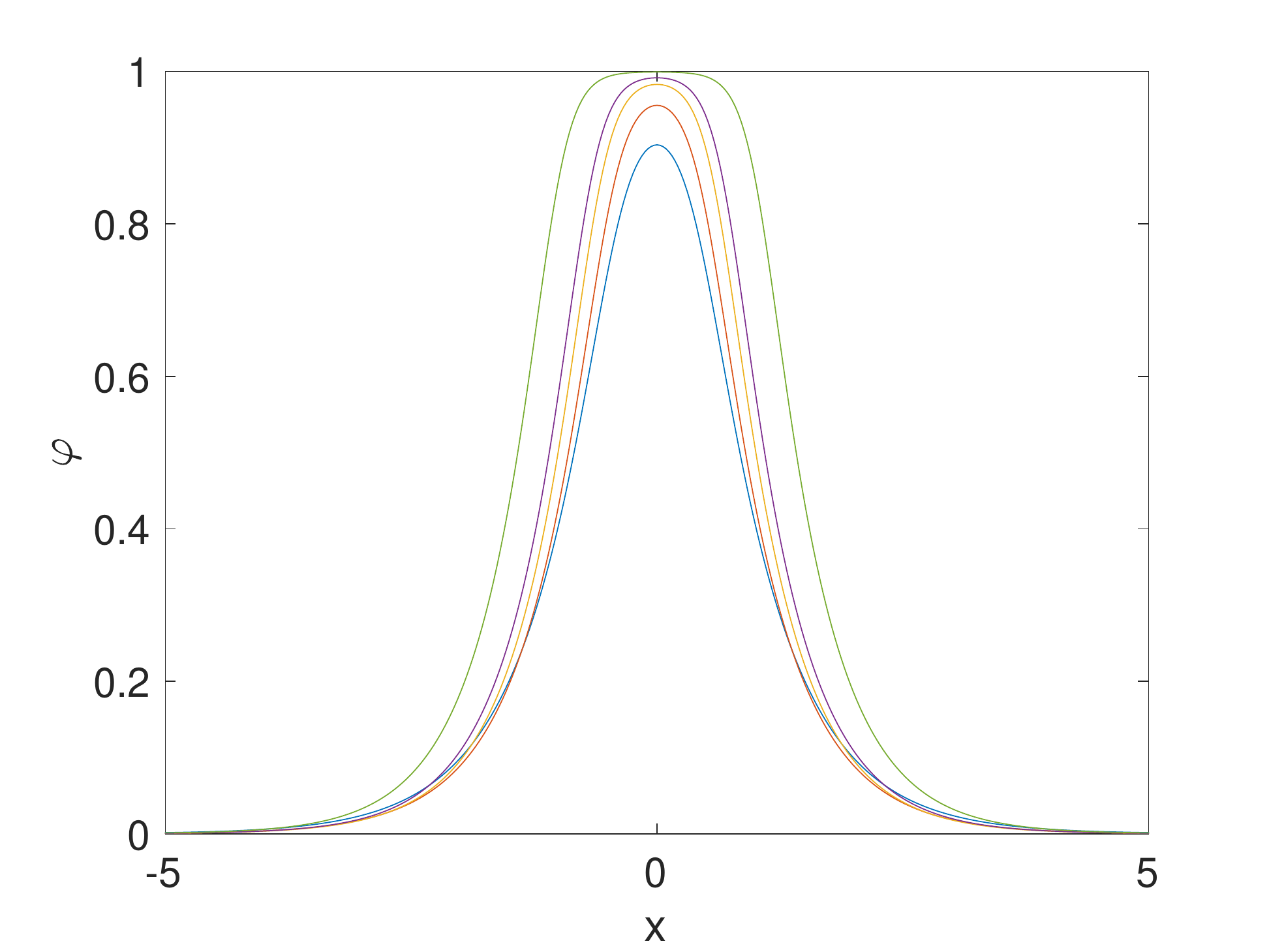}
  \includegraphics[width=0.49\textwidth]{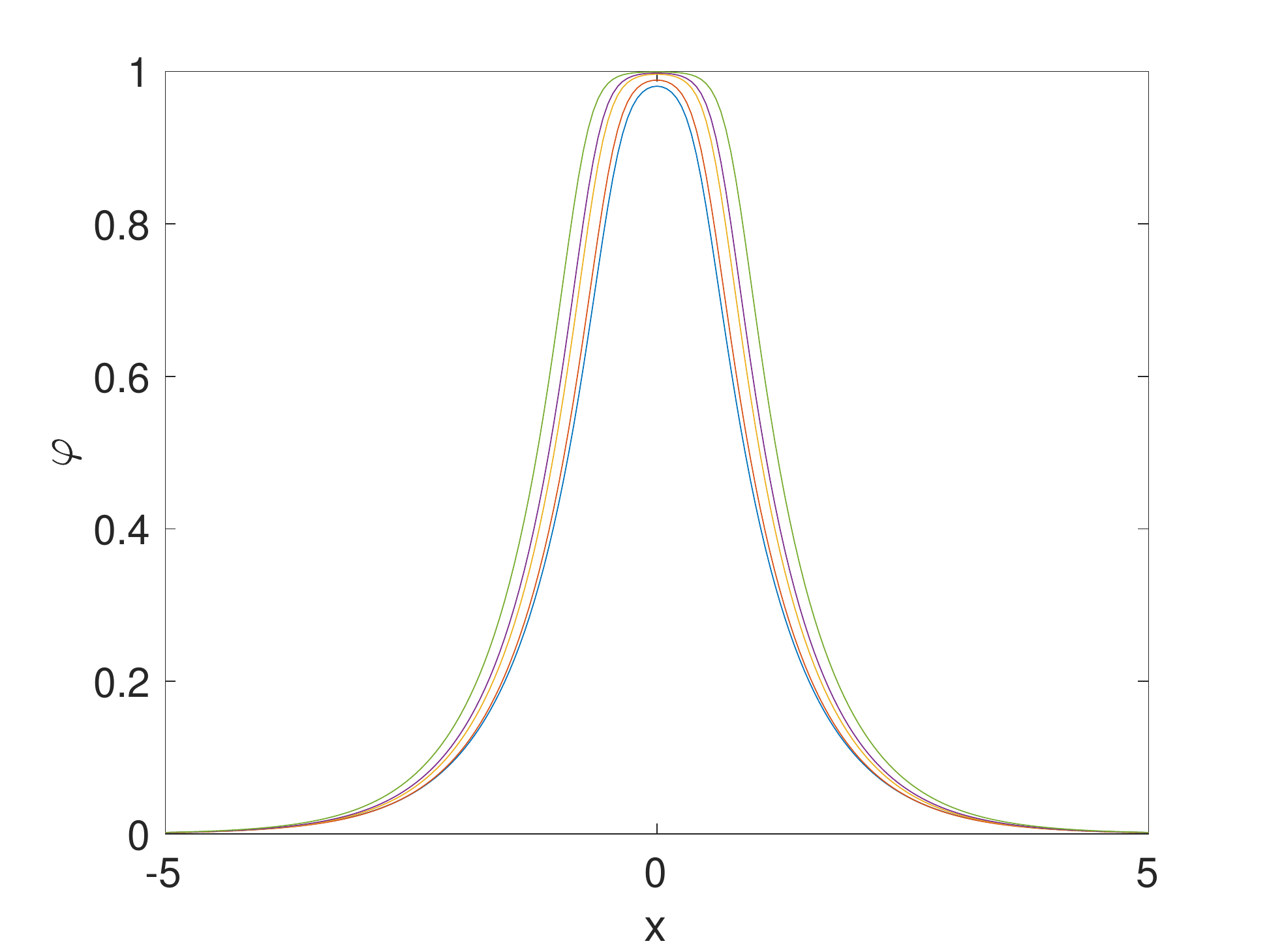}
  \caption{Ground state solution to equation (\ref{eqmodstat}) for 
  $a=9$ and $\alpha=2$ on the left  ($b=2.0$, $2.5$, $2.8$, $2.9$ 
  and $2.99$ from bottom to 
  top), and for $\alpha=3$ on the right ($b=2.0$, $2.1$, $2.2$, 
  $2.22$ and $2.24$ from bottom to 
  top). }
 \label{figGS23}
\end{figure}

\section{Numerical study of the time evolution}
In this section we study the time evolution of initial data for the 
equation (\ref{eqmodtimealpha}). We study the stability of the ground 
state 
and the time evolution of general initial data in the Schwartz class 
of smooth rapidly decreasing functions for various parameters. 

The results of this section can be summarized in the following 
\begin{conjecture}
    The ground states of equation (\ref{eqmodtimealpha}) are 
    asymptotically stable if the perturbed initial data satisfy 
    $|\phi(x,0)|<1$. The long time behavior of solutions for general 
	localized initial data is characterized by ground states and 
	radiation. 
\end{conjecture}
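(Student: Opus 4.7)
The plan is to split the conjecture into two separate questions: orbital/asymptotic stability of the family of ground states \eqref{gsexplicit}, and soliton resolution for general localized data inside the invariant region $\{|\phi(x,0)|<1\}$. A preliminary step needed for both parts is a local and then global well-posedness theory for \eqref{eqmodtimealpha} in the set $X$. The combination of the conserved mass $\|\phi\|_{L^2}^2$ with the conserved energy $E[\phi]$, together with the pointwise a.e. bound $|\phi|\le 1$ already recalled for elements of $X$, should provide the $H^1$ control required to keep the denominator $1-|\phi|^{2\alpha}$ uniformly away from zero, at least for initial data close to a ground state. A convenient way to run local well-posedness is to recast \eqref{eqmodtimealpha} in the variable $u=\arcsin(\phi^{\alpha})$, as in the derivation of \eqref{eqmodstatu}, which removes the quasilinear singularity and reduces the problem to a semilinear NLS-like equation to which standard Strichartz theory applies.

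For orbital stability of a given ground state, I would use its variational characterization as a minimizer of $E$ on $X$ together with the uniqueness and non-degeneracy supplied by Theorem~\ref{existence}, and apply the Grillakis--Shatah--Strauss framework. In one space dimension the only relevant slope condition is the sign of $\frac{d}{db}\int \varphi^{2}\,dx$ along the explicit family \eqref{gsexplicit}, which is a direct computation parametrized by $b\in (0,a/(\alpha+1))$. For the asymptotic part, I would then follow the Buslaev--Perelman and Cuccagna strategy: decompose the solution as $\phi(t,x)=e^{i\theta(t)}\bigl(\varphi_{b(t)}(x-y(t))+\eta(t,x)\bigr)$ with $\eta$ orthogonal to the generalized kernel of the linearization at $\varphi_{b(t)}$. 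Non-degeneracy from Theorem~\ref{existence} ensures this kernel has the minimal dimension dictated by symmetries, so the modulation equations for $(b,y,\theta)$ are solvable and produce a dispersive equation for $\eta$. The convergence $\eta(t)\to 0$ in a local or weighted norm would then be extracted from a Morawetz/virial identity for \eqref{eqmodtimealpha} and from linear decay estimates for the linearized operator, whose spectrum is transferred from the simpler $u$-equation \eqref{eqmodstatu}.

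The second half of the conjecture, soliton resolution for general localized data, is the main obstacle: even for semilinear NLS in dimension one such a statement is only known in the integrable cubic case, and the present equation is quasilinear. The natural attempt is a profile decomposition \`a la Bahouri--G\'erard along any sequence $t_n\to\infty$, combined with the uniqueness clause of Theorem~\ref{existence} to force every bound-state profile extracted along the decomposition to coincide, up to the symmetries of \eqref{eqmodtimealpha}, with the explicit ground state \eqref{gsexplicit}, and a Kenig--Merle-style minimal-energy argument to exclude non-radiating non-soliton bubbles. The serious new difficulties are the quasilinear structure, the absence of a scaling symmetry (which rules out a self-similar critical element analysis in its usual form), and the non-Lipschitz character of the nonlinearity near $\{|\phi|=1\}$; it is precisely these obstructions that justify the status of the statement as a conjecture grounded in the numerical evidence of Section~3 rather than a theorem.
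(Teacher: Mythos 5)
The statement you were given is labelled a \emph{conjecture} in the paper: the authors do not prove it. Their evidence is purely numerical (Section 3): perturbed ground states are evolved with a Fourier/Runge--Kutta scheme and the long-time state is matched, by fitting the parameter $b$ in \eqref{gsexplicit}, to a nearby ground state; Gaussian data are observed to resolve into a ground state plus radiation. So there is no proof in the paper to compare yours against, and your text --- which is a research programme rather than a proof --- has to be judged on whether its steps could actually be carried out. As written they cannot, and you essentially concede this yourself in your last paragraph.

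The concrete gaps are these. First, no well-posedness theory for \eqref{eqmodtimealpha} is available, and your proposed remedy --- passing to $u=\arcsin(\phi^{\alpha})$ --- only makes sense for the \emph{real, positive, stationary} profile, which is exactly how the paper uses it to obtain \eqref{eqmodstatu}. For a complex, time-dependent $\phi$ the quantity $\arcsin(\phi^{\alpha})$ is not well defined, and the quasilinear term involves $|\phi|^{2\alpha}$ rather than $\phi^{2\alpha}$, so the substitution does not semilinearize the evolution equation; a Madelung-type decomposition would be needed and introduces its own singularities where $\phi$ vanishes. Second, the Grillakis--Shatah--Strauss framework presupposes precisely this missing well-posedness, plus a verification that the quasilinear Hamiltonian structure fits the abstract setting; the slope condition on $\frac{d}{db}\int\varphi_b^2$ is the easy part. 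Third, upgrading orbital to asymptotic stability in one space dimension requires dispersive estimates at the slow $t^{-1/2}$ rate for the linearized operator together with control of derivative nonlinearities with a singular denominator near $|\phi|=1$; calling this a transfer ``from the simpler $u$-equation'' hides the entire difficulty. Fourth, as you correctly note, soliton resolution for generic localized data is open even for non-integrable semilinear NLS in dimension one. Your outline is a sensible account of why the statement is plausible and of what a proof would demand, but it is not a proof, and the paper does not claim one either --- it offers numerical evidence only.
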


Note that the separation of radiation from the bulk is much faster 
for the cubic case. For higher nonlinearity, this takes considerably 
longer, and on the used computers, we can reach an agreement of the 
final state and a ground state of the 
order of a few percents.

\subsection{Time evolution approach}
The exponential decay of 
the stationary solutions  makes the use of Fourier spectral methods attractive. 
Thus we define the standard Fourier transform of a function $u$, 
\begin{equation}
        \hat{u}  = \mathcal{F}u:=\frac{1}{2\pi}\int_{\mathbb{R}}^{} 
    e^{-ikx}u\, dx,\quad k\in\mathbb{R},
    \label{fourier}
\end{equation}
and consider the $x$-dependence in equation (\ref{eqmodtimealpha}) in 
Fourier space. 

The numerical solution is constructed on the interval $x\in L[-\pi,\pi]$ where 
$L>0$ is chosen such that the solution and relevant derivatives 
vanish with numerical precision (we work here with double precision 
which corresponds to an accuracy of the order of $10^{-16}$). The 
solution $\phi$ is approximated via a truncated Fourier series where the 
coefficients $\hat{\phi}$ are 
computed efficiently via a \emph{fast Fourier transform} (FFT).  
 This means we treat the equation,
\begin{equation}\label{eqmodtimefourier}
i\partial_t \hat{\phi}= 
-ik\mathcal{F}\left(\frac{\partial_{x}\phi}{1-|\phi|^{2\alpha}}\right)+\mathcal{F}\left(\alpha|\phi|^{2\alpha-2}\frac{|\partial_x\phi|^2}{(1-|\phi|^{2\alpha})^2}\phi-a|\phi|^{2\alpha}\phi\right),
\end{equation}
and approximate the Fourier transform in (\ref{eqmodtimefourier}) by 
a discrete Fourier transform. 

The study of the  solutions to (\ref{eqmodtimefourier}) is 
challenging for several reasons: first it is an NLS equation which 
leads to a \emph{stiff} system of ODEs if FFT techniques  are used. 
Since a possible definition of \emph{stiffness} is that explicit 
time integration schemes are not efficient, the use of special 
integrators is recommended in this case. But most of the explicit 
stiff integrators for NLS equations, see for instance \cite{etna} and 
references therein, assume a stiffness in the linear part of the 
equations. However, here the second derivatives with respect to $x$ 
appear in nonlinear terms. Note that the NLS equation is not stiff if 
perturbations of the ground states are considered, but mainly when 
zones of rapid modulated oscillations appear, so called dispersive 
shock waves, see the discussion in \cite{etna}. 

Since we are interested in the former, the main problem of equation 
(\ref{eqmodtimealpha}) is not the stiffness, but the singular 
term for $|\phi|\to 1$. Since the equation is focusing, it is to be 
expected that for initial data with modulus close to 1 it will be 
numerically challenging since the focusing nature of the equation 
might lead for some time to even higher values of $|\phi|$. Obviously 
the regime $\phi\sim 1$ is the most interesting from a mathematical 
point of view since here the strongest deviation from the standard 
NLS equation is to be expected. To achieve the needed high accuracy 
for this,  a fourth order 
method is necessary, and even there with small time steps. It turns 
out that in the studied examples the requirement for accuracy is of 
similar order as stability conditions for  an explicit approach. We 
apply here  the standard 
explicit fourth order Runge-Kutta method for which the stability 
condition is $\Delta t\sim 1/N^{2}$, see for instance 
\cite{trefethen}. We have compared this approach to the 
unconditionally stable second order Crank-Nicolson method, but had 
trouble to reach the needed accuracy in an efficient way. The 
explicit approach is also 
more efficient than an implicit 4th order Runge-Kutta scheme as 
applied in \cite{etna} where a nonlinear equation has to be solved iteratively at 
each time step. 

The accuracy 
of the solution is controlled as in \cite{etna}: the decrease of 
the Fourier coefficients indicates the spatial resolution since the 
numerical error of a truncated Fourier series is of the order of the 
first neglected Fourier coefficients. The error in the time 
integration is controlled via conserved quantities. We use the energy 
(\ref{E})
which is a conserved quantity of (\ref{eqmodtimealpha}), but which will 
numerically depend on time due to unavoidable numerical errors. In 
the examples below, the relative energy is always conserved to better 
than $10^{-6}$. 

We test the numerical approach at the example of the ground state. 
Concretely we consider the ground state solution for $\alpha=1$, $a=9$, $b=4.4$ as 
initial data. We use $N=2^{10}$ Fourier modes in $x$ for 
$x\in5[-\pi,\pi]$ and $N_{t}=10^{5}$ time steps for $t\in[0,1]$. 
Note that though the ground state solution is stationary, it is not 
time independent. We compare the numerical and the exact solution, 
i.e., the solution to (\ref{gsexplicit}) times $e^{ibt}$ at 
the final time $t=1$. This difference  is of the order of 
$10^{-14}$ as shown in Fig.~\ref{figdelta}. The relative conservation of the energy (\ref{E}) is 
during the whole computation of the order of $10^{-14}.$ This shows 
that the ground state can be numerically evolved with an 
accuracy of the order $10^{-14}$, and that the conservation of the 
numerically computed energy 
indicates the accuracy of the time integration. 
\begin{figure}[htb!]
  \includegraphics[width=0.5\textwidth]{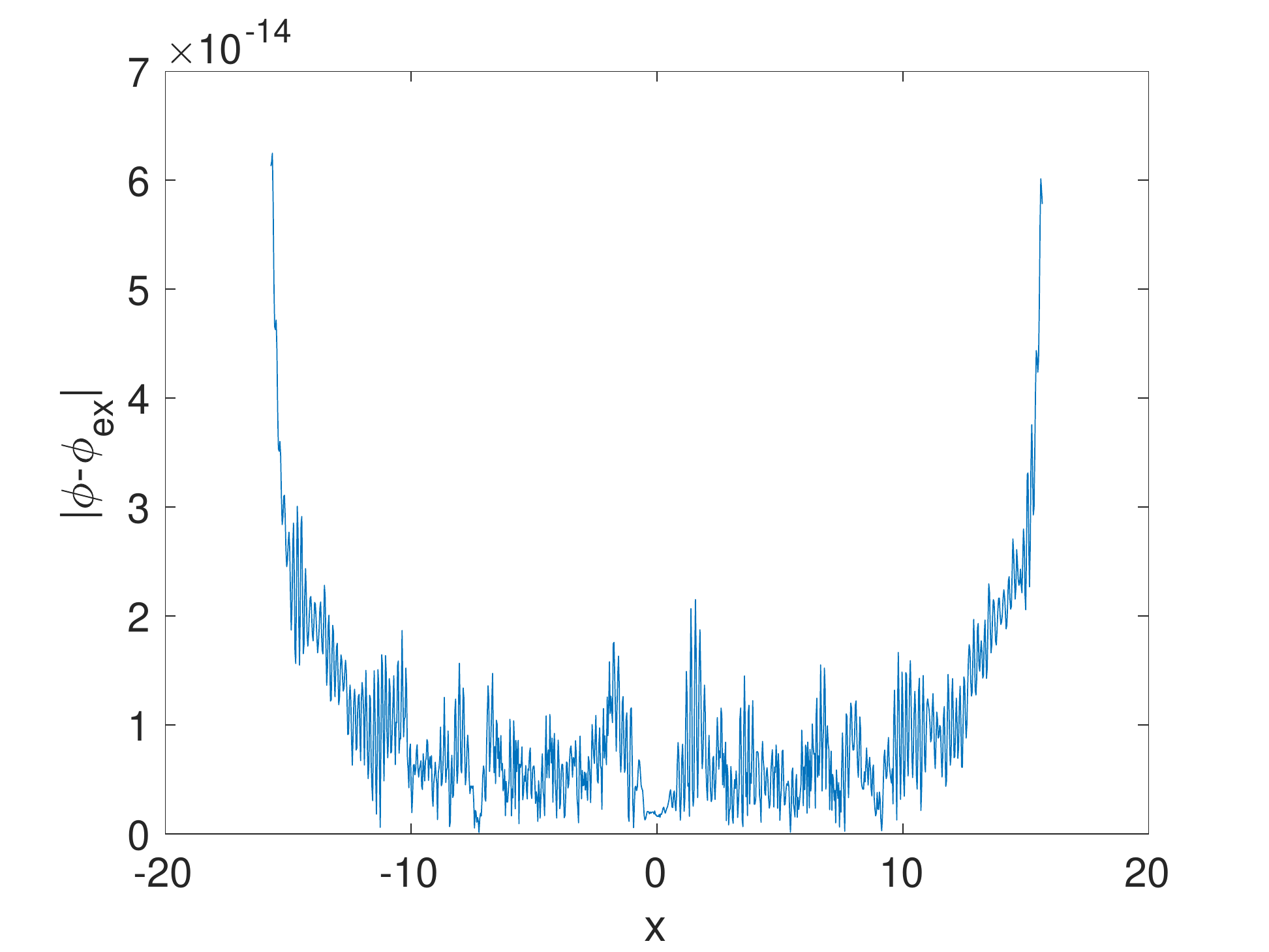}
  \caption{Difference of the numerical solution to the equation (\ref{eqmodtime}) for initial 
  data being the ground state solution  for 
  $a=9$ and $b=4.4$, and the exact solution for $t=1$. }
 \label{figdelta}
\end{figure}

\subsection{Perturbations of ground states}

In this subsection we consider the stability of the ground states 
(\ref{gsexplicit}). To this end we perturb it first in the form 
$\phi(x,0)=\lambda \varphi(x)$, where $\lambda\sim 1$. 
\begin{rem}\label{rem1}
	Numerically one cannot consider arbitrary small perturbations as in 
analytical work since one would have to wait for very large times 
in order to get meaningful results. But using long times would imply 
that numerical errors of even high order schemes pile up. Thus in 
practice one always considers perturbations of the order of $1\%$ 
(some equations like the Korteweg-de Vries equation allow 
perturbations of the order of $10\%$ such that the solution stays 
close to a soliton, and the present equation is similar in this 
respect). This implies, however, that the final state of a perturbed 
ground state is not the exact ground state  even for 
asymptotically stable ground states, but a nearby one. 

\end{rem}

\paragraph{\textbf{The cubic case}}

We use $N = 2^{11}$ 
Fourier modes and $N_{t}=5*10^{5}$ time steps for $t\in[0,0.25]$, 
i.e., more than a whole period of the perturbed ground state.  In 
Fig.~\ref{figGS9.44.099} we show the solution for the perturbed 
ground state with $\lambda=0.99$. It can be seen that after a 
short phase of focusing a ground state with slightly larger maximum 
than the initial data 
is reached. In addition there is some radiation towards infinity. 
The Fourier coefficients of the solution 
at the final time on the right of 
Fig.~\ref{figGS9.44.099} indicate that the solution is fully resolved 
in $x$. 
\begin{figure}[htb!]
  \includegraphics[width=0.49\textwidth]{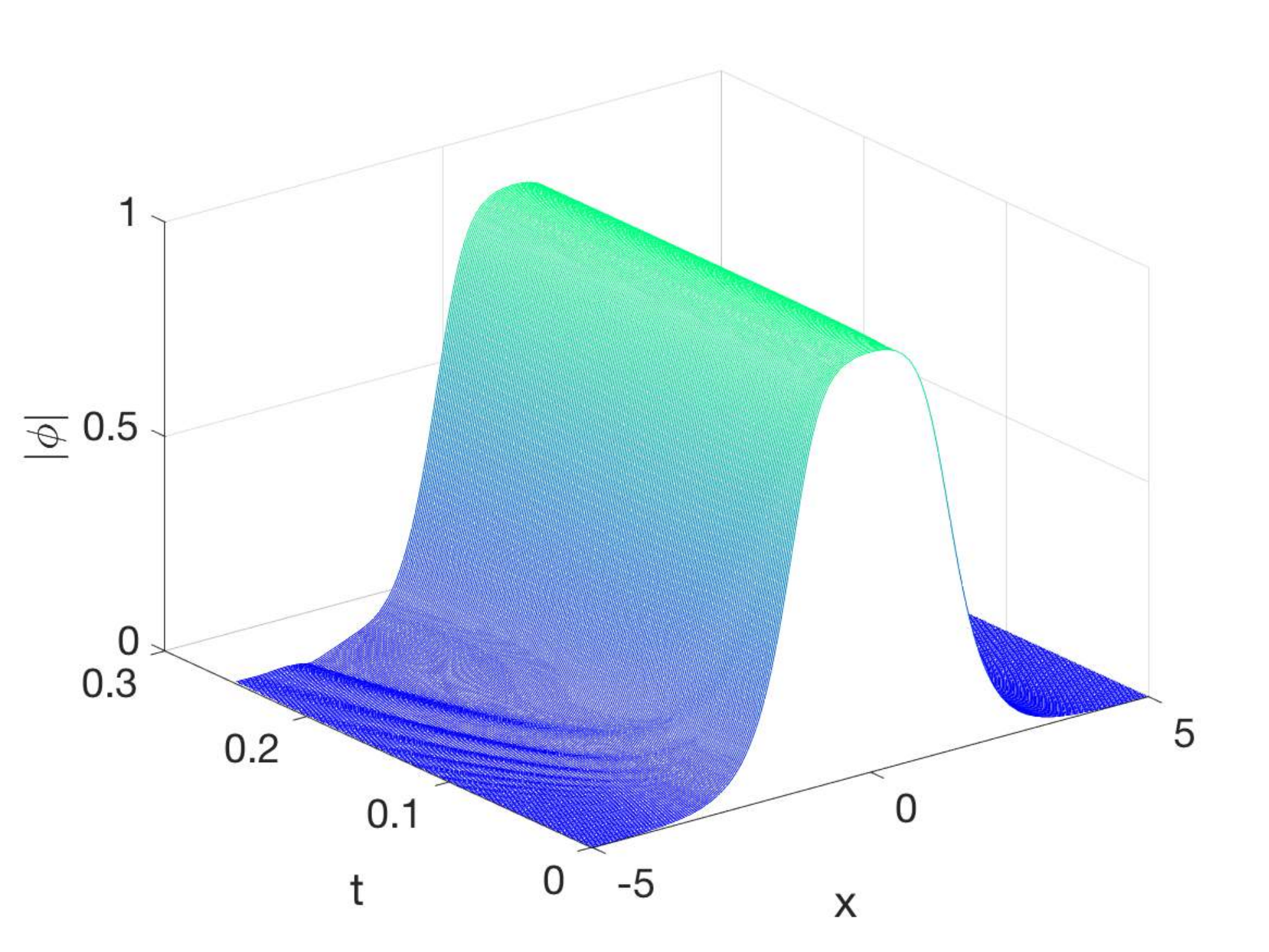}
  \includegraphics[width=0.49\textwidth]{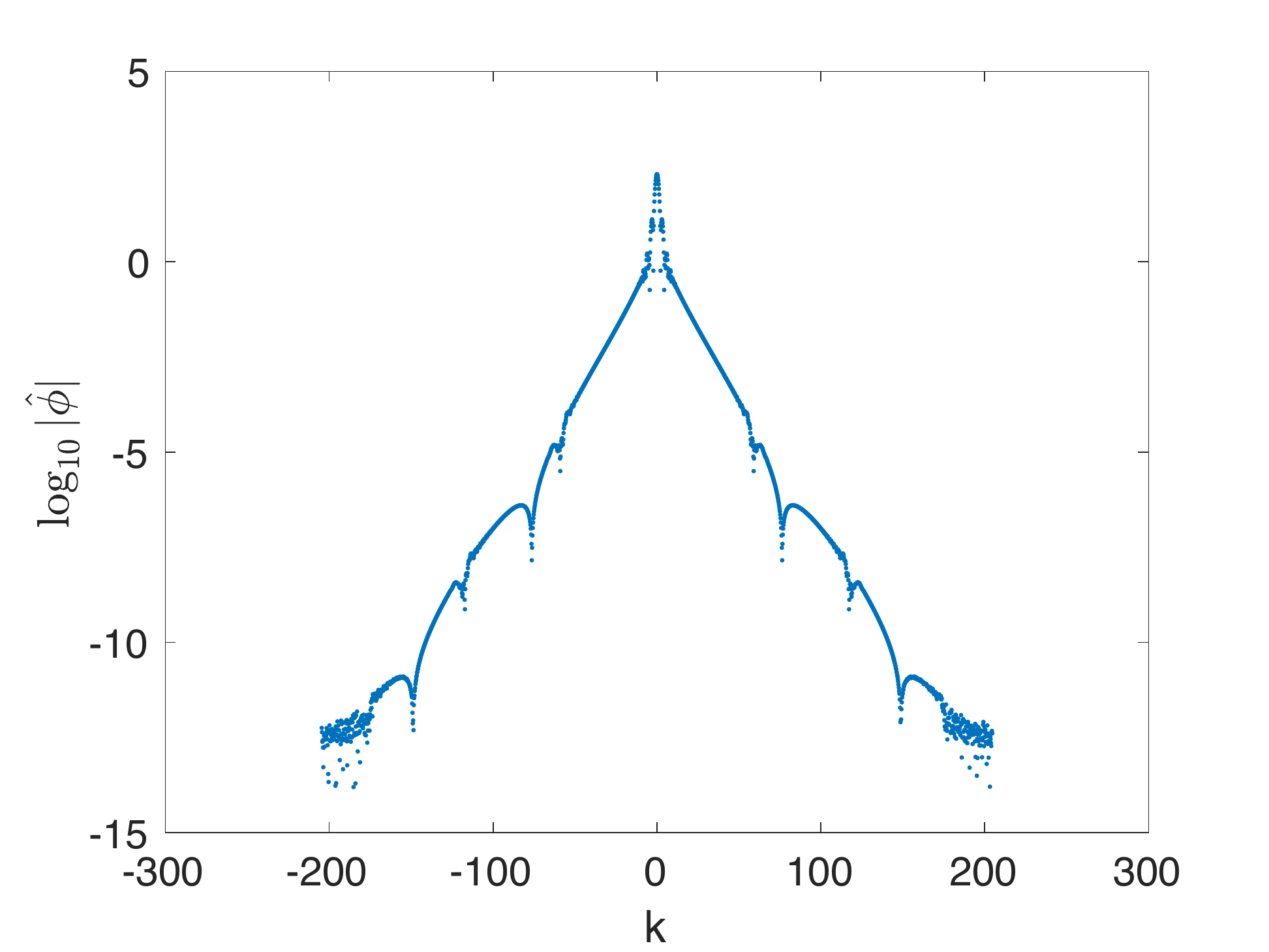}
\caption{Solution to the equation (\ref{eqmodtime}) for the initial 
  data $\phi(x,0)=0.99 \varphi(x,a=9,b=4.4)$ and $\alpha=1$ on the 
  left, and the Fourier coefficients of the solution at the final 
  time on the right. }
 \label{figGS9.44.099}
\end{figure}

The reaching of a ground state is also suggested by the $L^{\infty}$  
norm of the solution shown on the left of 
Fig.~\ref{figGS9.44.099inf}.  As stated in remark \ref{rem1}, we 
expect a ground state of slightly different $b$ as the final state 
since we have a perturbation of the order of $1\%$. To determine this 
ground state, we use some optimal fitting of the ground state 
(\ref{gsexplicit}) by varying $b$ in order to approximate the solution at the final time. 
To this end we minimize the residual of the modulus of $\phi$ and $\varphi$ 
for $|x|<x_{0}$ (we consider $x_{0}=5$) via the optimization 
algorithm \cite{fminsearch} implemented in Matlab as the function 
\emph{fminsearch}. On the right of 
Fig.~\ref{figGS9.44.099inf}, we show the solution of 
Fig.~\ref{figGS9.44.099} at the final time in blue together with a fitted 
ground state in green. The good 
agreement (the green curve covers the blue one in the plot where it 
is identical up to plotting accuracy) shows that the final state is indeed a very nearby ground 
state, $b = 4.388$, which can be already indentified (the difference is of the 
order of $10^{-3}$) at an early time. 
\begin{figure}[htb!]
  \includegraphics[width=0.45\textwidth]{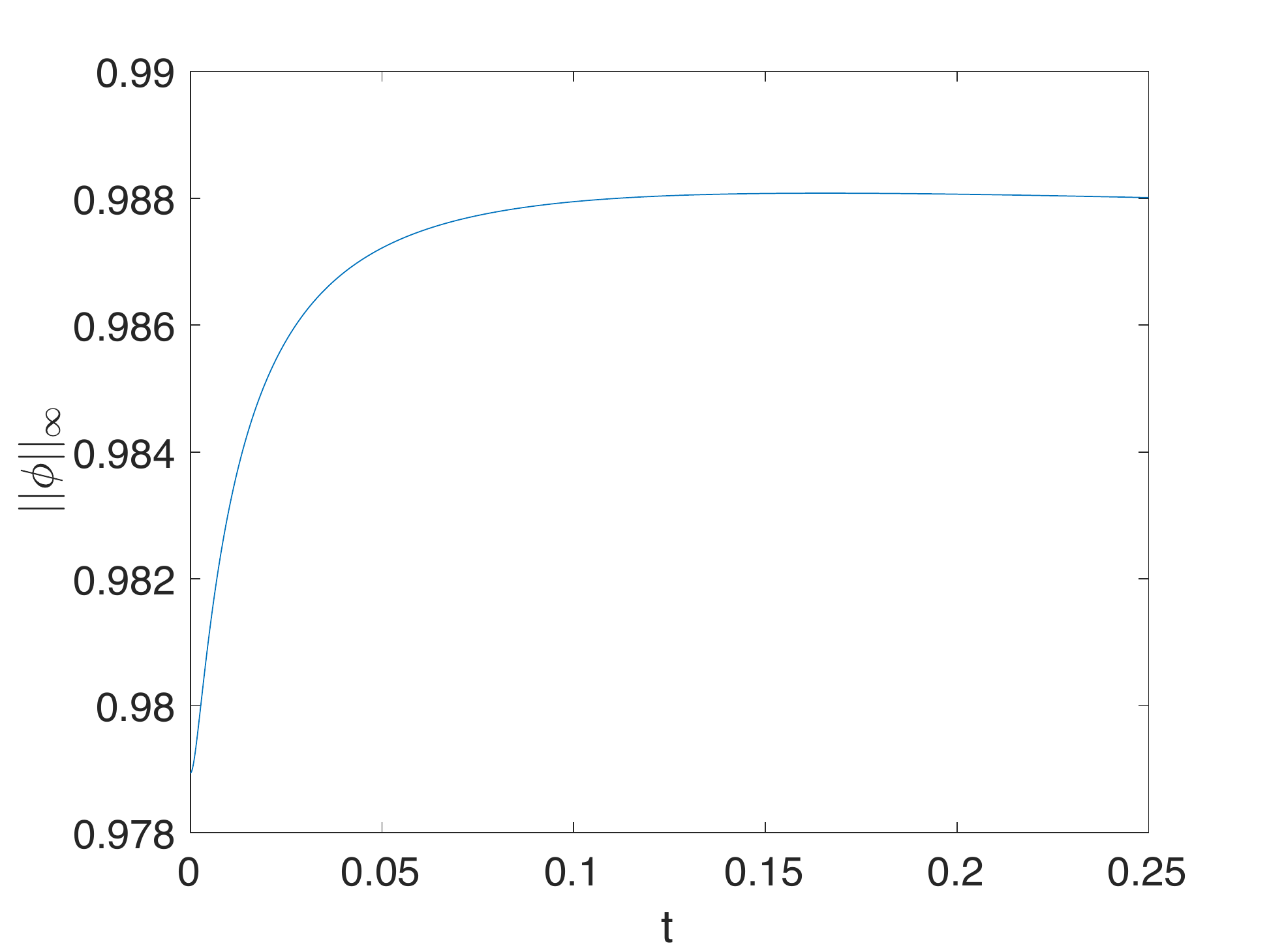}
  \includegraphics[width=0.45\textwidth]{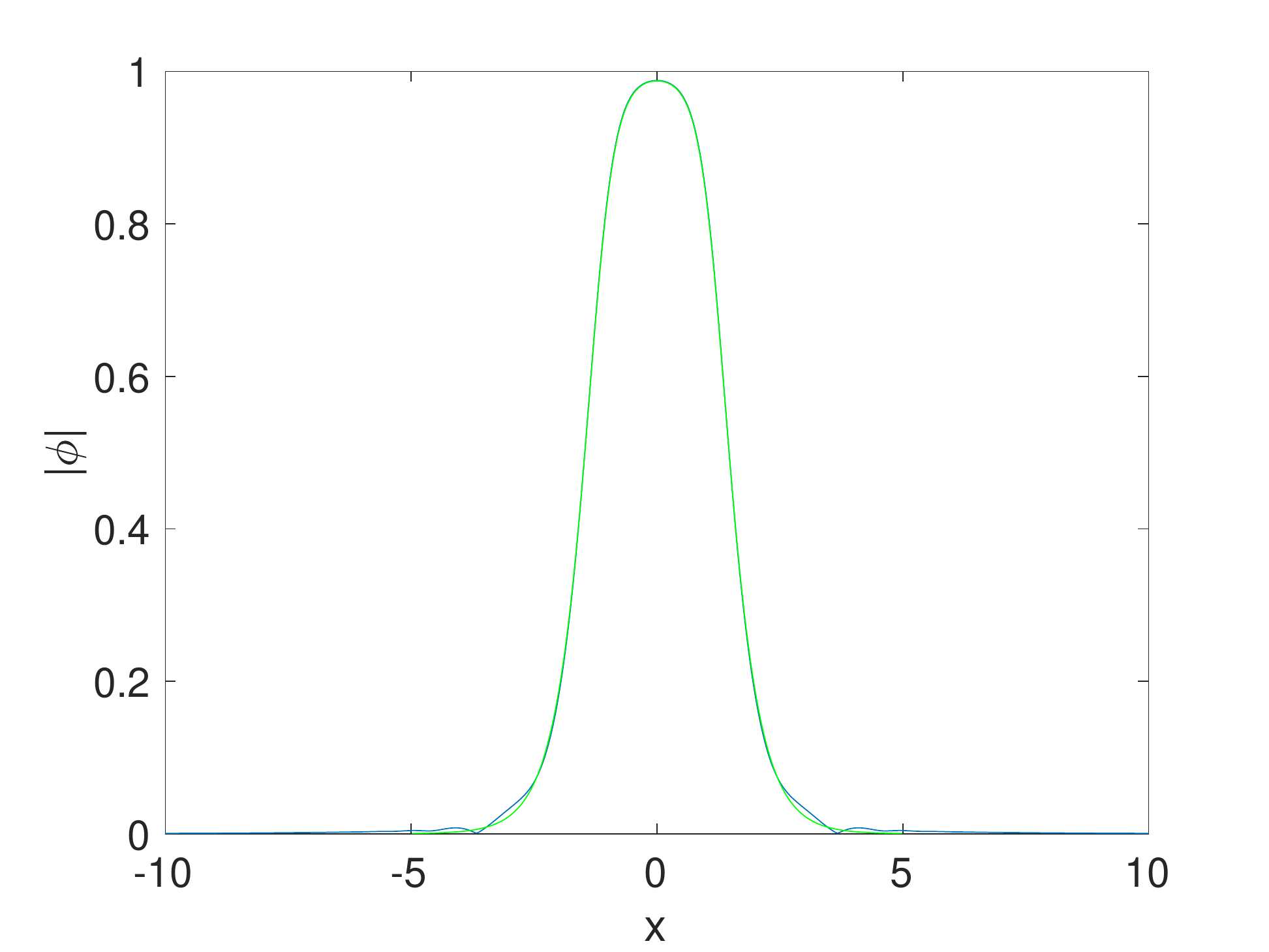}
  \caption{On the left the $L^{\infty}$ norm of the solution of 
  Fig.~\ref{figGS9.44.099}, on the right the solution at the final time in blue and a fitted 
  ground state in green.  }
 \label{figGS9.44.099inf}
\end{figure}

If we perturb the same ground state as in Fig.~\ref{figGS9.44.099} 
with a factor $\lambda>1$ (such that $||\lambda \varphi||_{\infty}<1$), 
we observe a similar behavior as can be seen in Fig.~\ref{figGS9.44.1001}.
The decrease of the modulus of the 
Fourier coefficients on the right of Fig.~\ref{figGS9.44.1001} 
indicates that the numerical error in the spatial resolution is of 
the order of $10^{-8}$. This shows that there are   stronger 
gradients to resolve in this case than in Fig.~\ref{figGS9.44.099}. 
\begin{figure}[htb!]
  \includegraphics[width=0.49\textwidth]{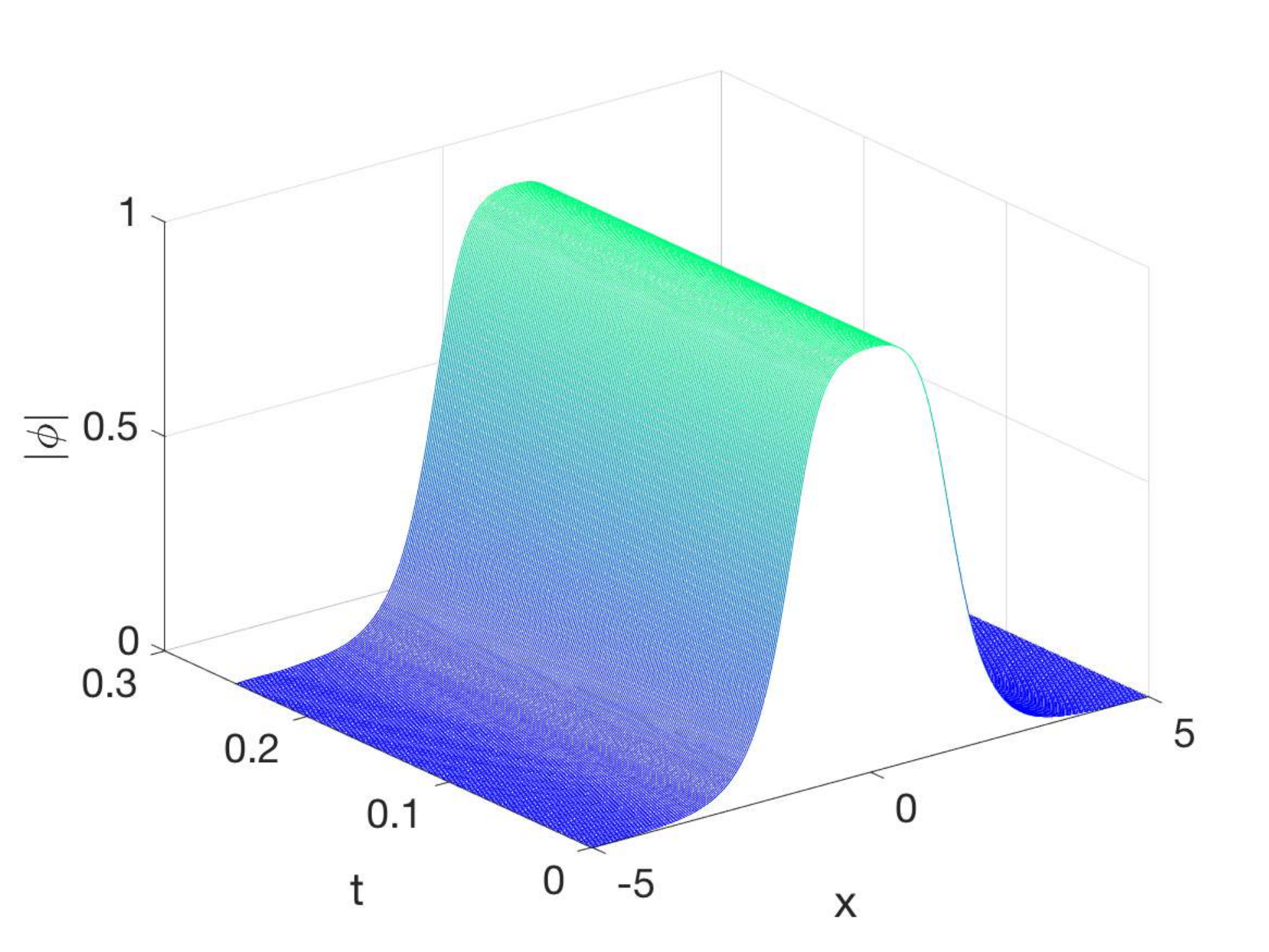}
  \includegraphics[width=0.45\textwidth]{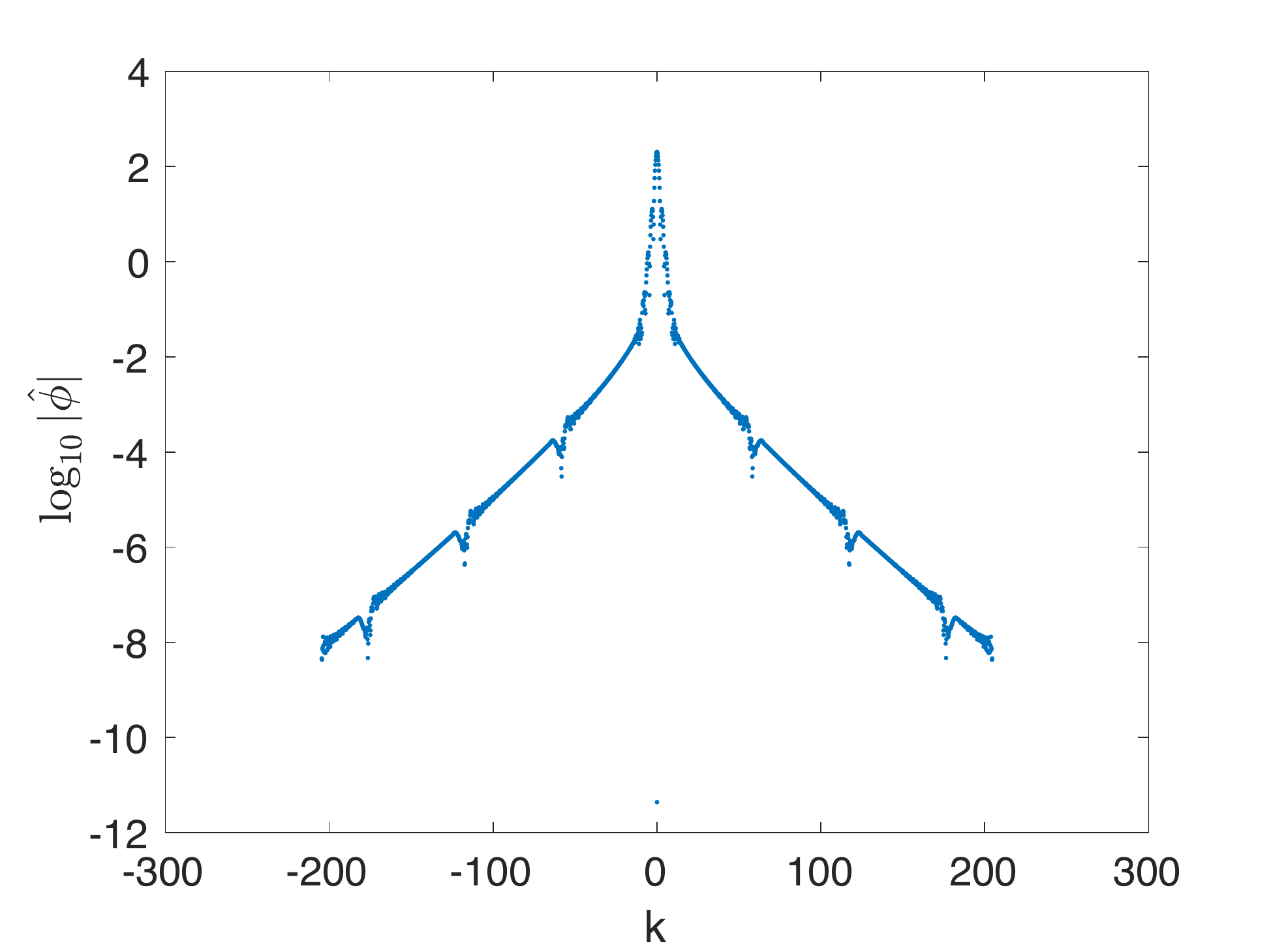}
  \caption{Solution to the equation (\ref{eqmodtime}) for the initial 
  data $\phi(x,0)=1.001\varphi(x,a=9,b=4.4)$ on the left and the 
  Fourier coefficients of the solution at the final time on the right.}
 \label{figGS9.44.1001}
\end{figure}

As is more obvious from the $L^{\infty}$ norm in Fig.
\ref{figGS9.44.1001inf} on the left, a ground state with slightly 
lower maximum than the initial data is quickly reached.  On the right 
of the same figure we show the solution at the final time in blue together 
with a fitted ground state ($b=4.4011$) in green. The agreement is so 
good that the blue line can hardly be seen (the difference is of the 
order of $10^{-3}$). 
\begin{figure}[htb!]
  \includegraphics[width=0.45\textwidth]{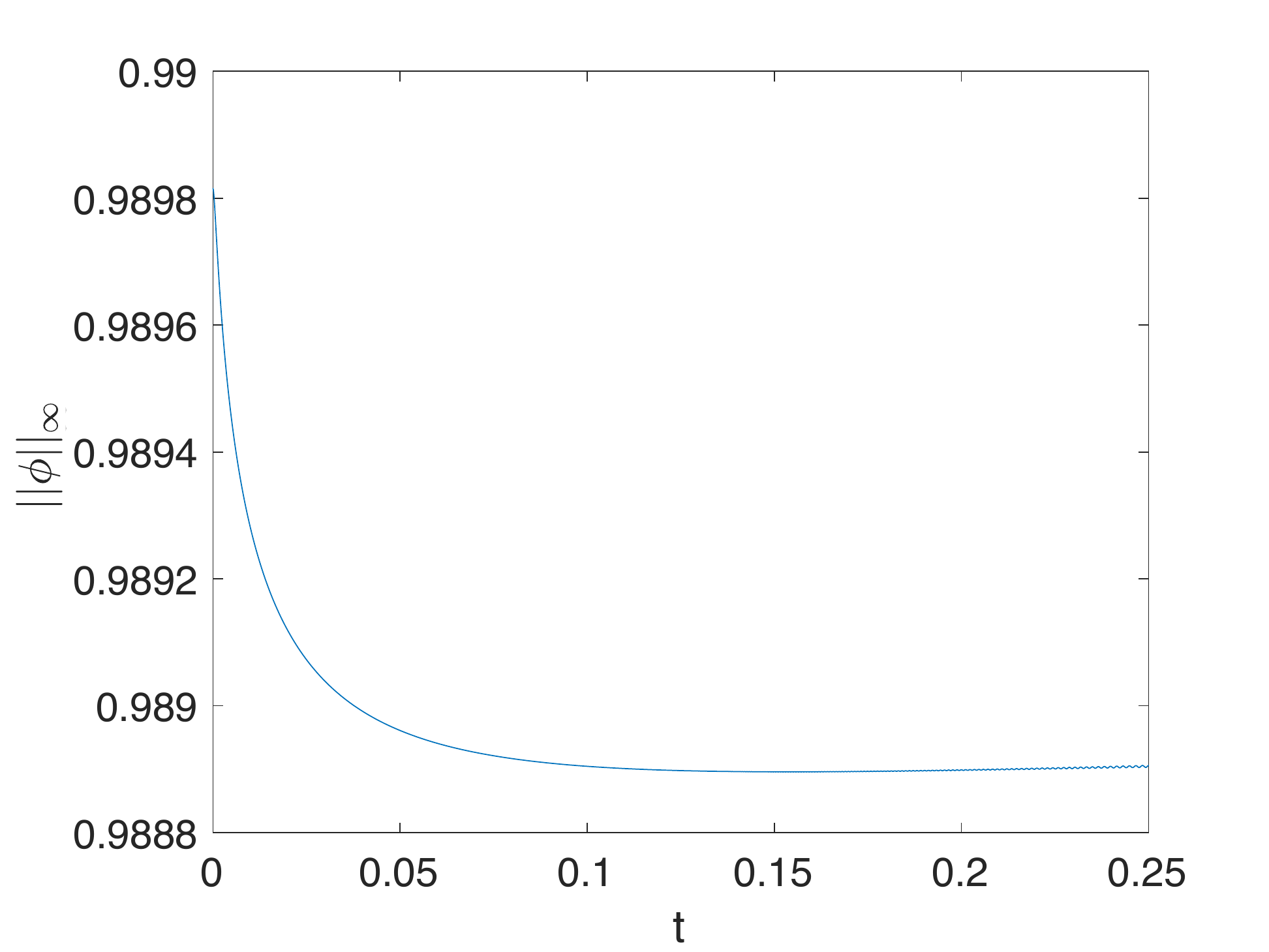}
  \includegraphics[width=0.45\textwidth]{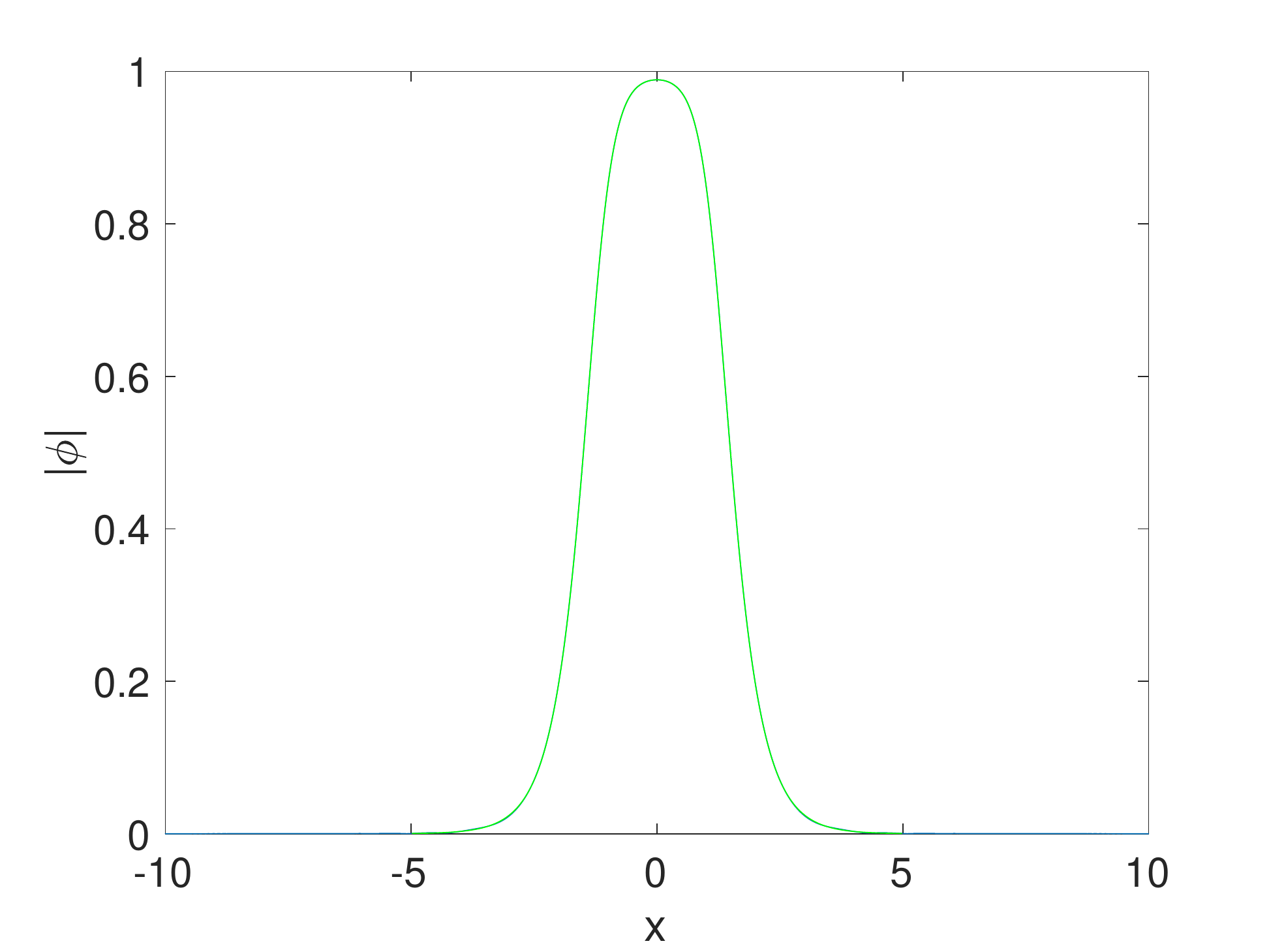}
  \caption{On the left the $L^{\infty}$ norm of the solution of 
  Fig.~\ref{figGS9.44.099}, on the right the solution at the final time in blue and a fitted 
  ground state in green.  }
 \label{figGS9.44.1001inf}
\end{figure}

The same initial data as above are perturbed with a localized 
perturbation of the form $\phi(x,0)=\varphi(x)\pm 0.001\exp(-x^{2})$. 
The resulting $L^{\infty}$ norms of the solutions to 
(\ref{eqmodtime}) for these initial data are shown in 
Fig.~\ref{NLSGSa9b44m001gaussinf}. In both cases the $L^{\infty}$ 
norm appears to approach a slightly smaller ground state than the 
unperturbed one (for the $-$ sign 
in the initial data) respectively slightly larger ground state (for the 
$+$ sign). Note that in the former case, the $L^{\infty}$ norm grows 
monotonically from its initial value to a value slightly below 
$0.9888$, whereas it decreases in the latter case monotonically from 
its initial value to a value slightly above $0.9888$. 
\begin{figure}[htb!]
  \includegraphics[width=0.45\textwidth]{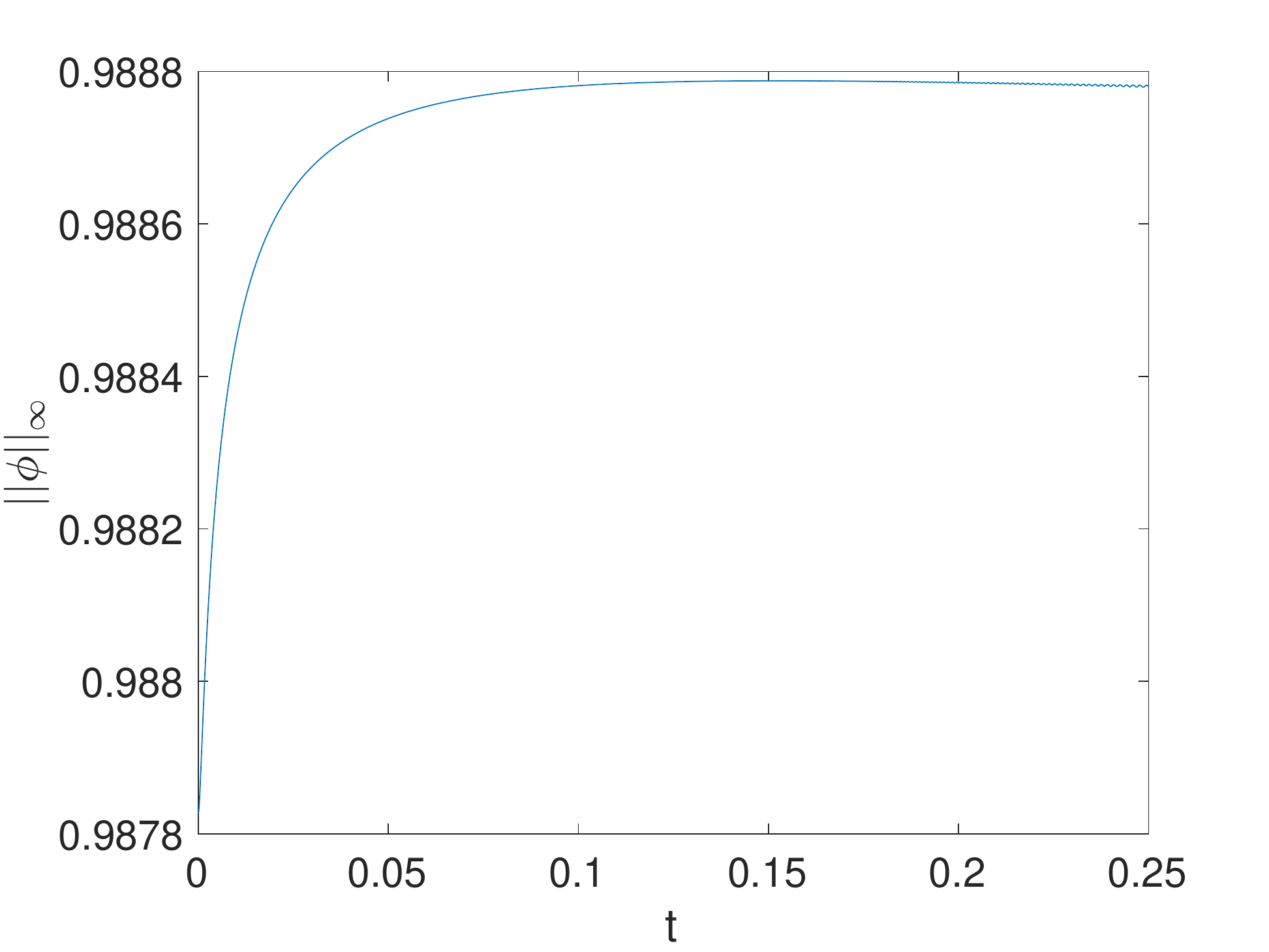}
  \includegraphics[width=0.45\textwidth]{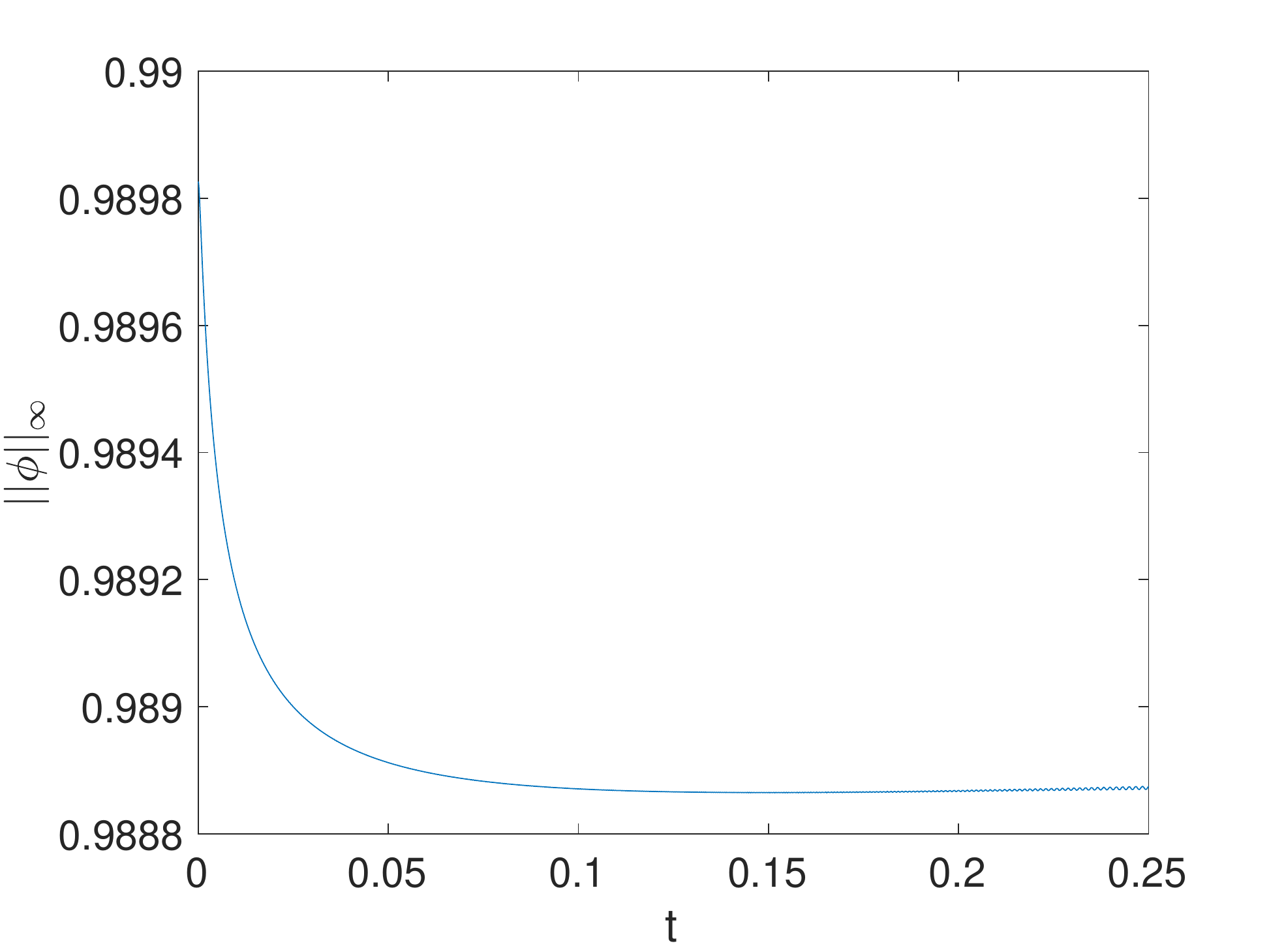}
  \caption{$L^{\infty}$ norms of the solution of (\ref{eqmodtime}) 
  for the initial data $\phi(x,0)=\varphi(x)\pm0.001\exp(-x^{2})$, on the 
  left for the minus sign, on the right for the plus sign in the 
  initial data.  }
  \label{NLSGSa9b44m001gaussinf}
\end{figure}

In Fig.~\ref{NLSGSa9b44m001gaussfit} we show the solutions for both 
cases at the 
final time in blue together with fitted ground states. In both cases 
the agreement is so good that a difference (again of the order of 
$10^{-3}$) can hardly be recognized. Thus the ground
states appear to be asymptotically stable also in this case. The 
fitting yields  $b=4.3992$ for the $-$ sign and $b=4.4008$ for the 
$+$ sign, i.e., the expected values close to $4.4$. 
\begin{figure}[htb!]
  \includegraphics[width=0.45\textwidth]{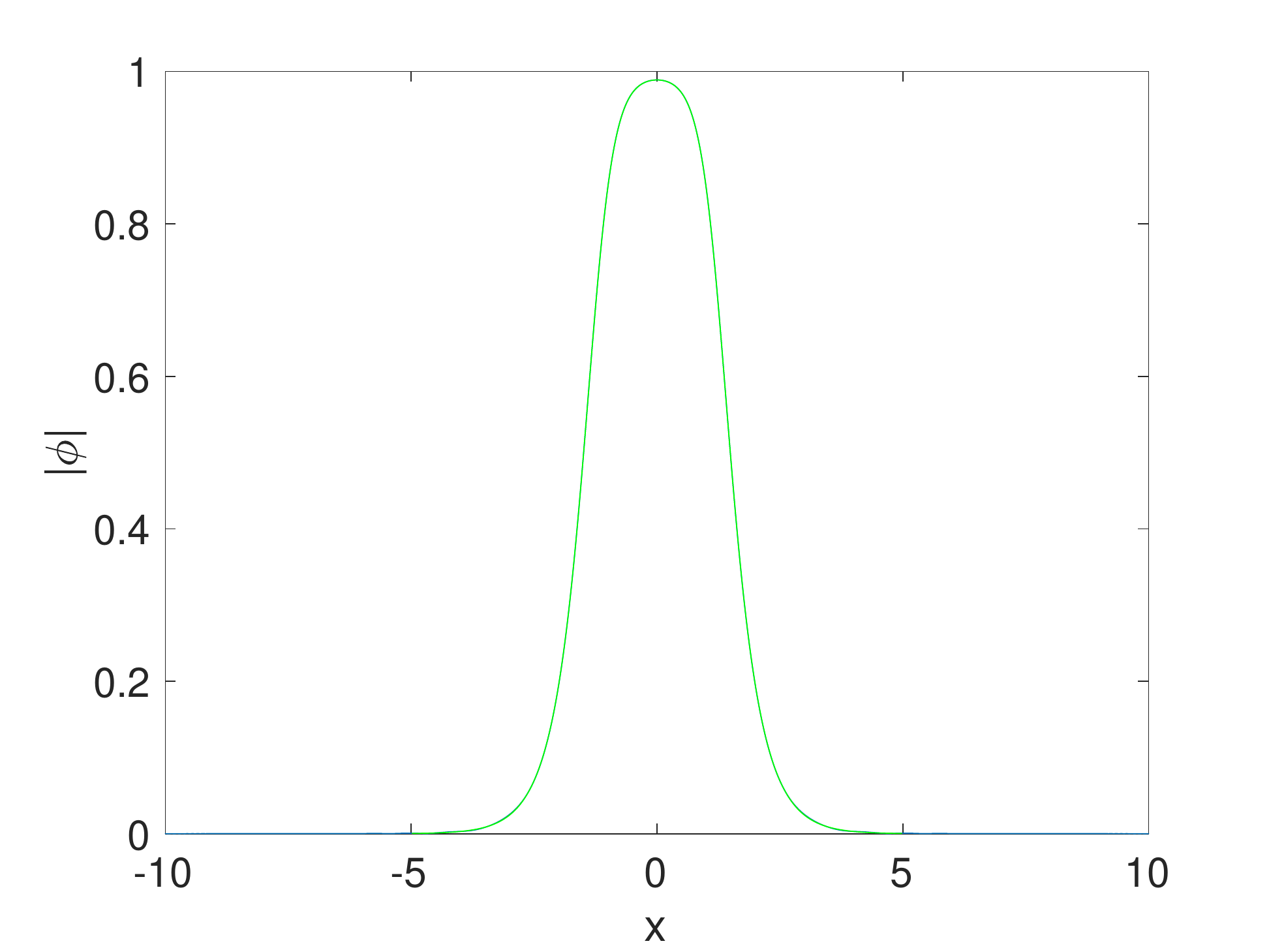}
  \includegraphics[width=0.45\textwidth]{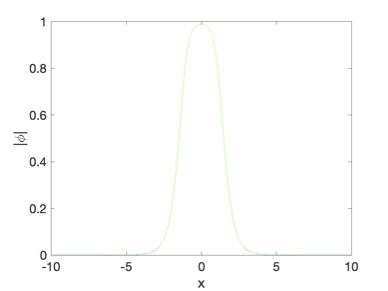}
  \caption{Solutions of (\ref{eqmodtime}) 
  for the initial data $\phi(x,0)=\varphi(x)\pm0.001\exp(-x^{2})$, on the 
  left for the minus sign, on the right for the plus sign in the 
  initial data in blue together with a fitted ground state in green.  }
  \label{NLSGSa9b44m001gaussfit}
\end{figure}

\paragraph{\textbf{Higher nonlinearities}}

We repeat the experiments of Fig.~\ref{figGS9.44.099} and 
Fig.~\ref{figGS9.44.1001} for $\alpha=2$, i.e., a higher 
nonlinearity. As can be seen below, the ground states still appear to 
be stable, but take a considerably longer time to settle to a final 
state. This means we will need much higher numerical resolution in 
order to avoid too much interaction between the radiation and the 
bulk on a torus (we simply choose a larger period), and have to solve 
for longer times. 
We use $N=2^{13}$ Fourier modes for $x\in20[-\pi,\pi]$ and 
$N_{t}=2*10^{6}$ time steps for $t\in[0,2]$. 

The $L^{\infty}$ norms for the perturbed ground state 
can be seen in Fig.~\ref{NLSGSa9b2alpha29_099inf} on the left. Again a ground 
state with slightly 
smaller maximum than the perturbed gound state appears to be reached 
for $\lambda=0.99$. But 
this time the $L^{\infty}$ norm performs some damped oscillations 
around what seems to be an 
asymptotic value. Since there is no dissipation in the system, this 
settling into a final state can take quite long compared to a period 
of the initial ground state, and interestingly takes much longer than in 
the case $\alpha=1$ above\footnote{The very small oscillations which 
appear for larger times on the $L^{\infty}$ norm are due to us studying the perturbations in a 
periodic setting and not on $\mathbb{R}$. Thus radiation emitted to 
infinity appears on the other side of the computational domain and 
interacts after some time with the bulk which leads to small
periodic excitations of the latter. This effect can be fully 
suppressed by considering larger periods.}. Note that the quintic nonlinearity 
is $L^{2}$ critical in the standard NLS equation, i.e., solutions to 
initial data of sufficient mass blow up in finite time. Here no 
blow-up is observed, $||\phi||_{\infty}<1$ for all times.  The oscillations around some finite value for the 
$L^{\infty}$ norm can also be seen on the right of 
Fig.~\ref{NLSGSa9b2alpha29_099inf}, where the solution at the final 
time is shown together with a ground state of the fitted 
asymptotic maximum ($b\sim 2.9221$). It can be seen that the fitting 
is not as good  as in the case 
$\alpha=1$ shown above, and that the found value for $b$ is slightly 
larger than the original one. This means that the solution at 
the final time is not yet sufficiently close to the asymptotic 
solution. 
\begin{figure}[htb!]
  \includegraphics[width=0.45\textwidth]{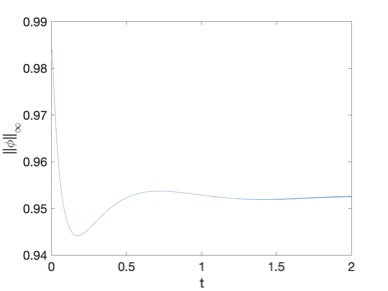}
  \includegraphics[width=0.45\textwidth]{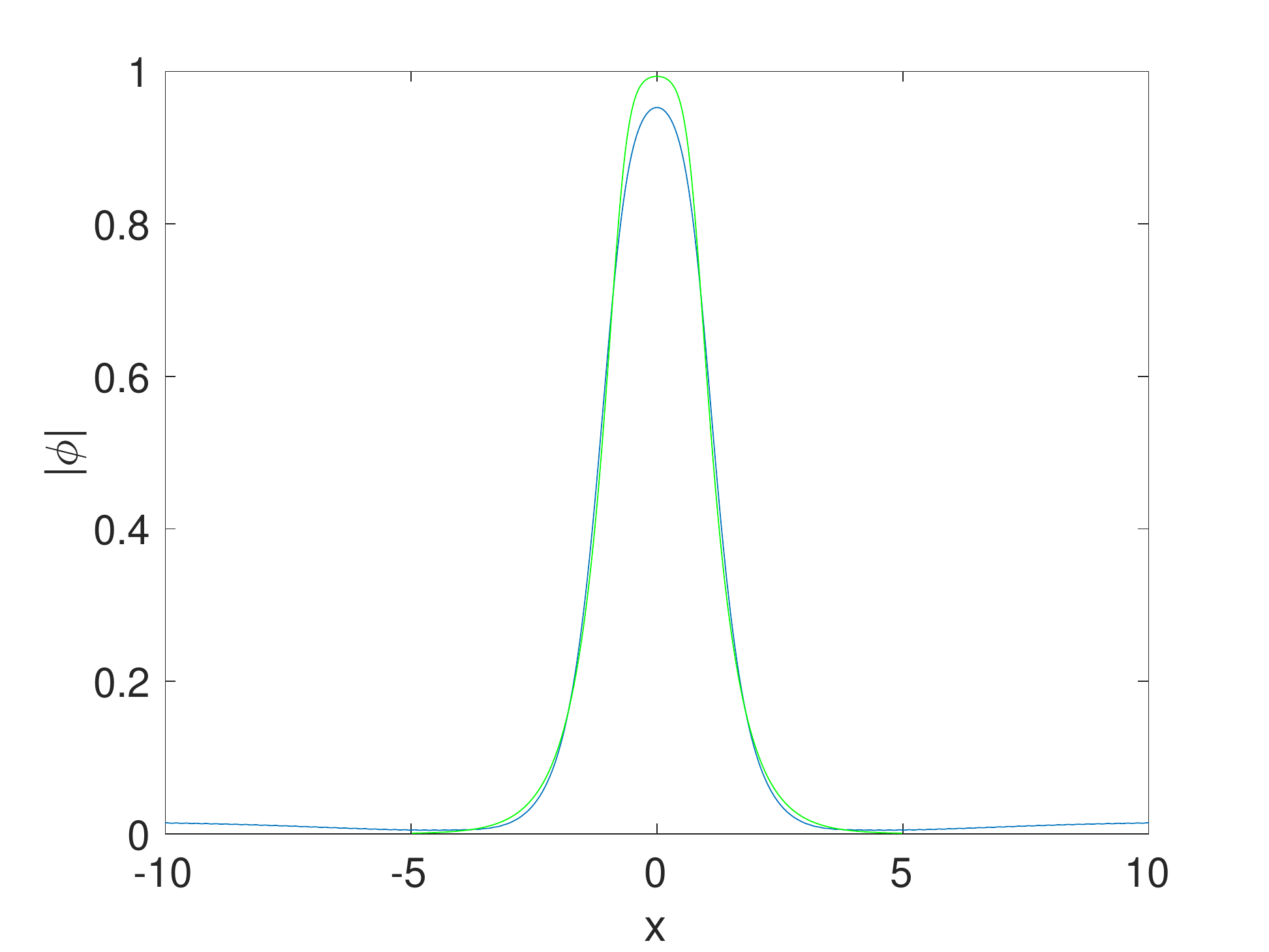}
  \caption{$L^{\infty}$ norm of the solution of (\ref{eqmodtimealpha}) 
  for the initial data $\phi(x,0)=\lambda\varphi(x)$ and $a=9$, 
  $b=2.9$ and $\alpha=2$ for $\lambda=0.99$ on the left,  on the 
  right the solution at the final time in blue together with an 
  estimated ground state in green.  }
  \label{NLSGSa9b2alpha29_099inf}
\end{figure}

The situation is very similar for a perturbation with $\lambda=1.001$ 
shown in Fig.~\ref{NLSGSa9b2alpha29_1001inf}. The $L^{\infty}$ on the 
left of Fig.~\ref{NLSGSa9b2alpha29_1001inf} again 
appears to oscillate around some asymptotic value which is not fully 
reached during our computation. A fitted value ($b\sim 2.9315$ for this final 
state leads to the green curve on the right of the same figure. It 
shows that the final state is not yet reached, but close to the green 
curve. Note that the ground state appears to be stable also for the Gaussian 
perturbations of Fig.~\ref{NLSGSa9b44m001gaussinf} which are not 
shown here.
\begin{figure}[htb!]
  \includegraphics[width=0.45\textwidth]{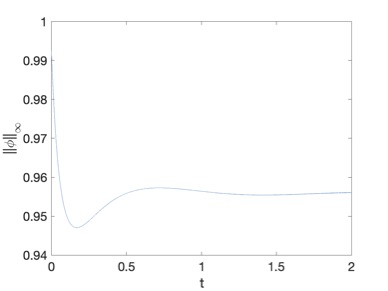}
  \includegraphics[width=0.45\textwidth]{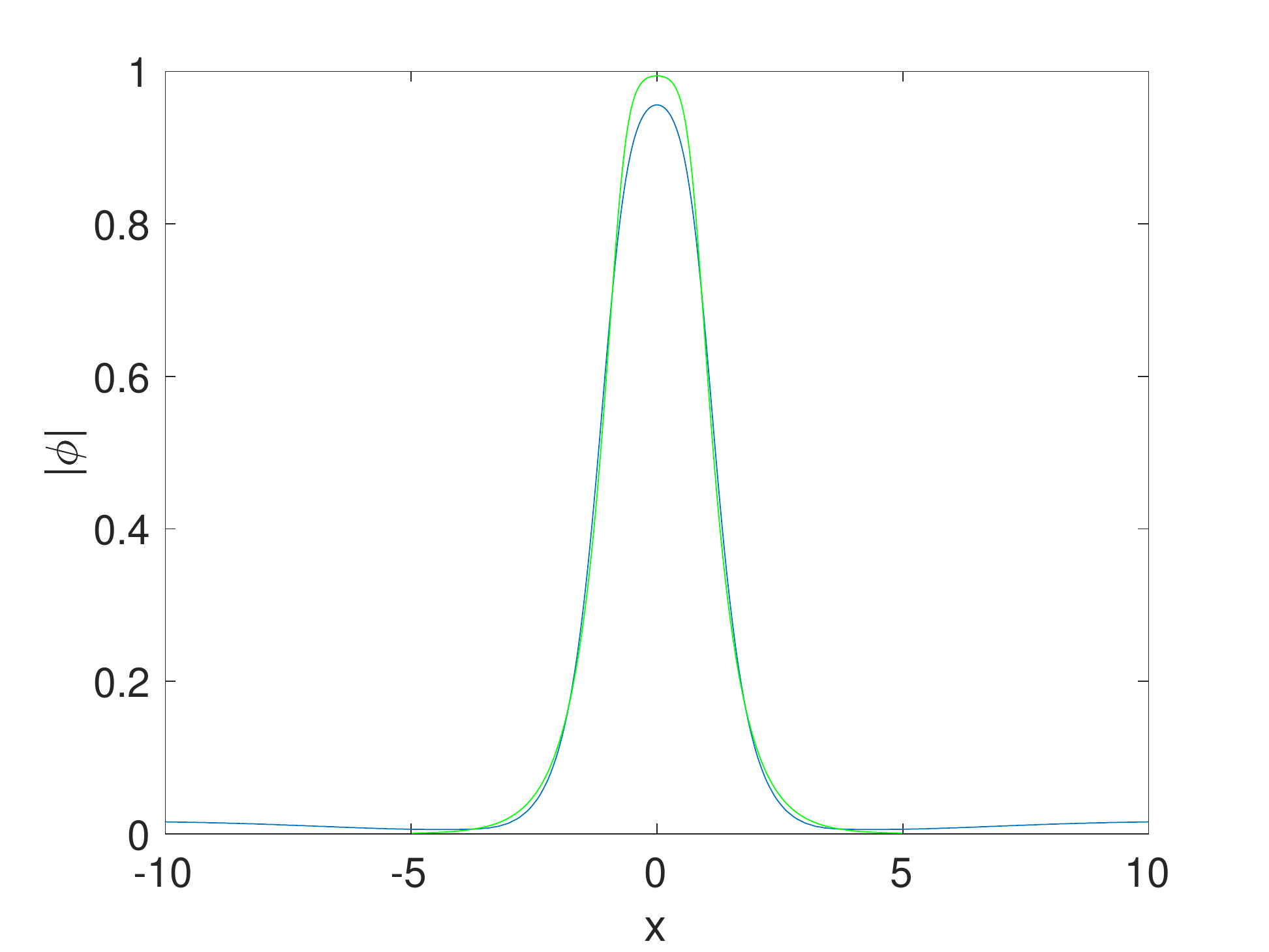}
  \caption{$L^{\infty}$ norm of the solution of (\ref{eqmodtimealpha}) 
  for the initial data $\phi(x,0)=\lambda\varphi(x)$ and $a=9$, 
  $b=2.9$ and $\alpha=2$ for $\lambda=1.001$ on the left,  on the 
  right the solution at the final time in blue together with an 
  estimated ground state in green.  }
  \label{NLSGSa9b2alpha29_1001inf}
\end{figure}

The same experiments as above are shown for an even higher 
nonlinearity $\alpha=3$  and $b=2.1$ in Fig.~\ref{NLSGSa9b21alpha3_099inf} for 
$\lambda=0.99$. The $L^{\infty}$ norm on the left of the figure 
appears to oscillate around some some asymptotic value. On the right 
of the same figure we show the solution at the final time in blue 
plus a fitted ($b\sim2.1492$) ground state in green. Once 
more the ground states appear to be stable (also for Gaussian perturbations 
not shown here), but the final state will be only fully reached at 
longer times (the fitted value of $b$ is even larger than the 
original here). 
\begin{figure}[htb!]
  \includegraphics[width=0.45\textwidth]{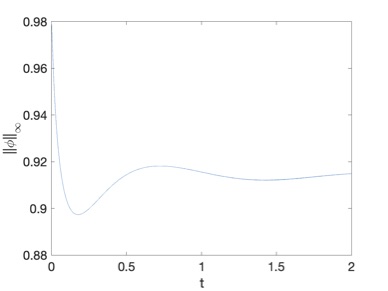}
  \includegraphics[width=0.45\textwidth]{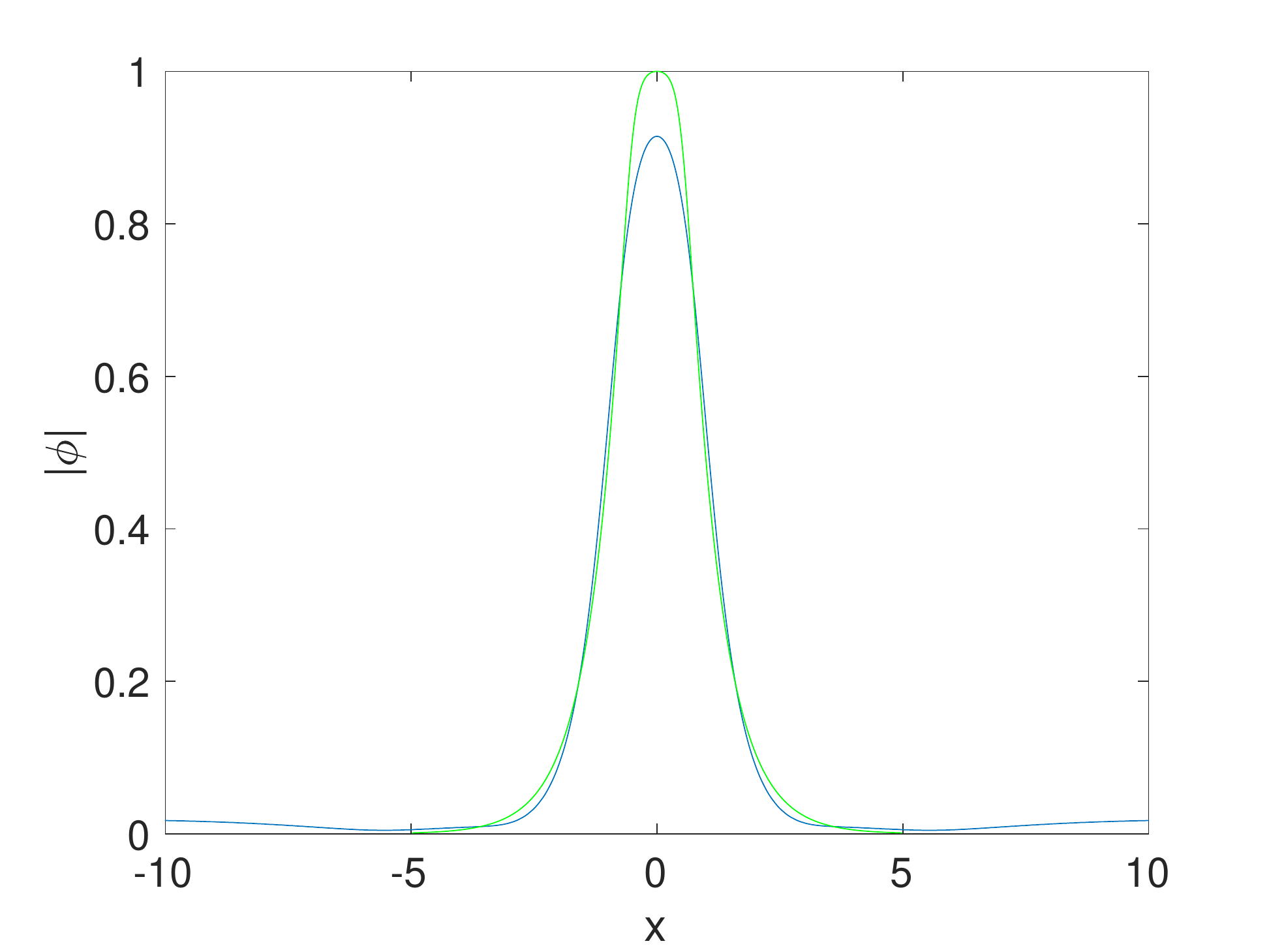}
  \caption{$L^{\infty}$ norm of the solution of (\ref{eqmodtimealpha}) 
  for the initial data $\phi(x,0)=0.99\varphi(x)$ and $a=9$, 
  $b=2.1$ and $\alpha=3$ on the left,  on the 
  right the solution at the final time in blue together with an 
  estimated ground state in green.  }
  \label{NLSGSa9b21alpha3_099inf}
\end{figure}

The situation is similar for $\lambda=1.001$ shown in 
Fig.~\ref{NLSGSa9b21alpha3_1001inf}. On the left the $L^{\infty}$ 
norm appears to oscillate around some asymptotic value. On the right 
we show the solution at the final time together with a fitted  
($b\sim2.164$) ground state solution in green. 
Note that for the standard NLS a septic nonlinearity 
would be $L^{2}$ supercritical which again would lead to a blow-up of initial 
data of sufficiently large mass.
\begin{figure}[htb!]
  \includegraphics[width=0.45\textwidth]{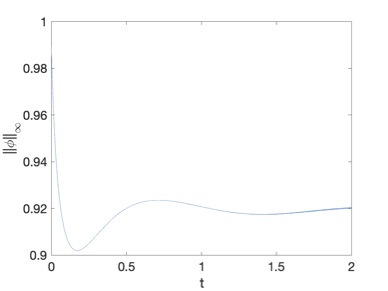}
  \includegraphics[width=0.45\textwidth]{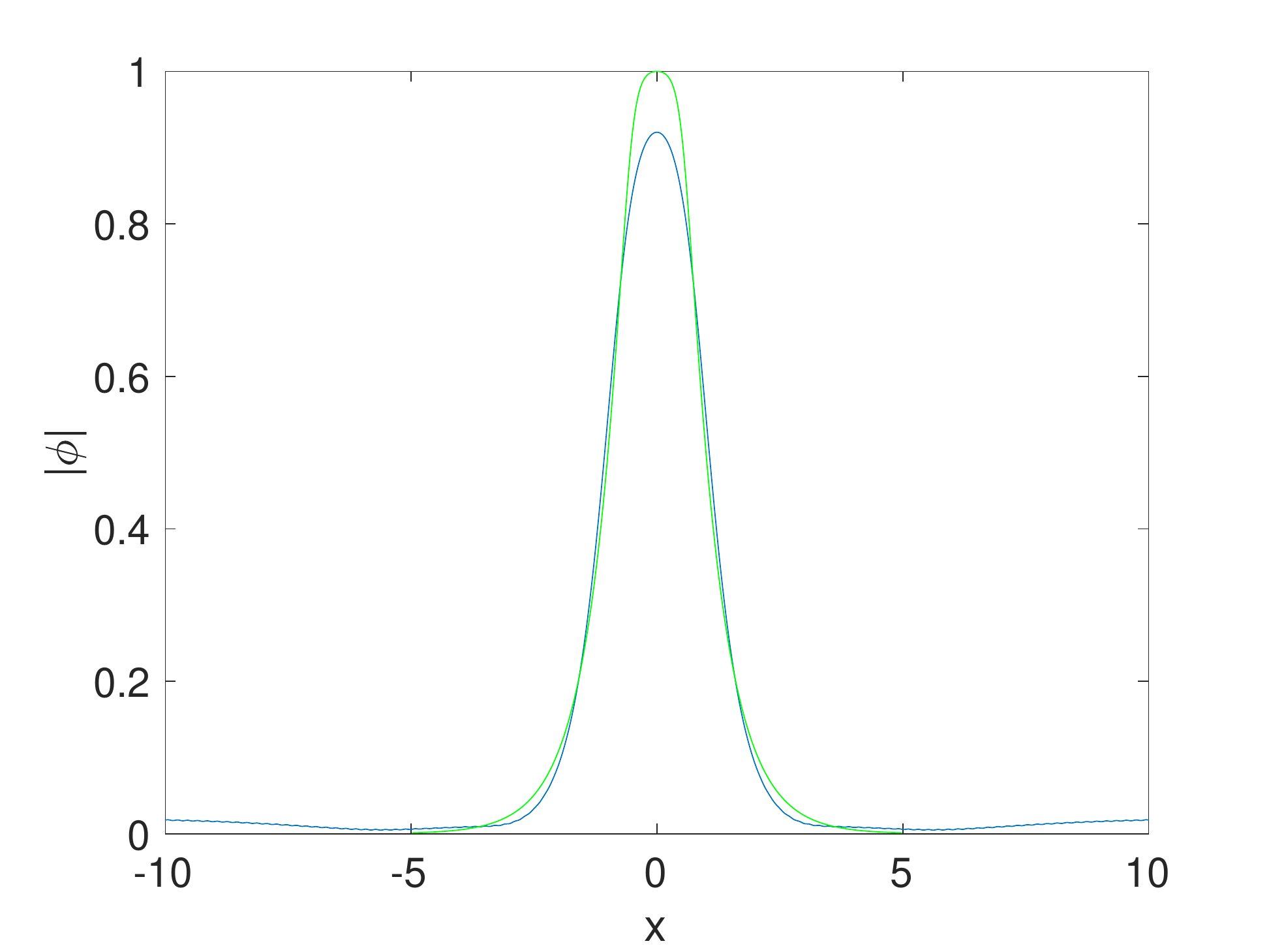}
  \caption{$L^{\infty}$ norm of the solution of (\ref{eqmodtimealpha}) 
  for the initial data $\phi(x,0)=1.001\varphi(x)$ and $a=9$, 
  $b=2.1$ and $\alpha=3$  on the left,  on the 
  right the solution at the final time in blue together with an 
  estimated ground state in green.  }
  \label{NLSGSa9b21alpha3_1001inf}
\end{figure}

\subsection{Schwartz class initial data}

An interesting question in this context is whether 
these stable ground states appear in the long time behavior of 
solutions to generic localized initial data. To address this 
question we consider initial data of the form $\phi(x,0)=\mu 
\exp(-x^{2})$ with $0<\mu<1$, again for $a=9$. We use $N=2^{12}$ 
Fourier modes for $x\in 40[-\pi,\pi]$ and $N_{t}=5*10^{5}$ time steps 
for the indicated time intervals. In Fig.~\ref{fig09gauss} it can be seen that the 
$L^{\infty}$ norm of the solution appears to oscillate around some 
asymptotic values, and that some radiation is emitted towards 
infinity. 
\begin{figure}[htb!]
  \includegraphics[width=0.7\textwidth]{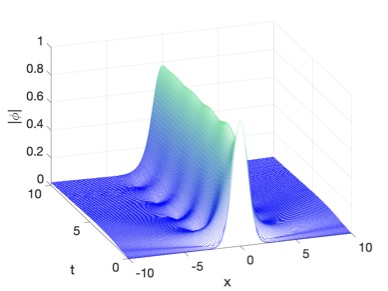}
  \caption{Solution to the equation (\ref{eqmodtime}) with $a=9$
  for the initial 
  data $\phi(x,0)=0.9\exp(-x^{2})$. }
 \label{fig09gauss}
\end{figure}

The former effect is more visible on the left of 
Fig.~\ref{fig09gaussinf} where the $L^{\infty}$ norm of the solution 
is shown. Since there is no dissipation in the system, the final 
ground state will be only reached asymptotically. On the right of the 
same figure we show the solution at the final time of the computation 
in blue together with an estimated ground state ($b\sim2.7188$) in 
green. 
\begin{figure}[htb!]
  \includegraphics[width=0.45\textwidth]{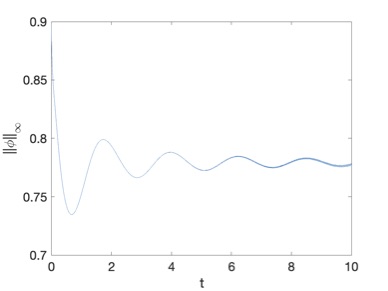}
  \includegraphics[width=0.45\textwidth]{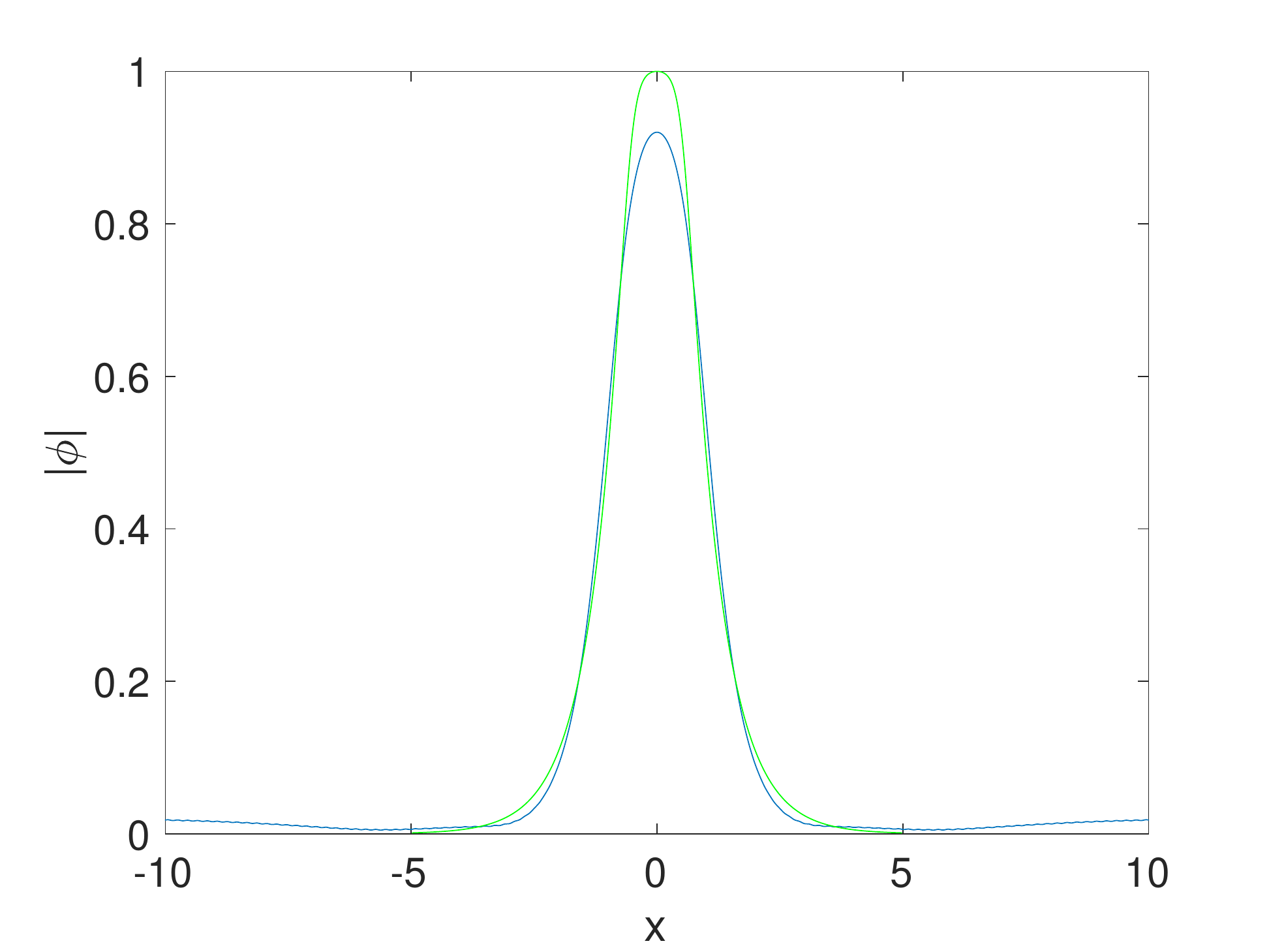}
  \caption{On the left the $L^{\infty}$ norm of the solution of 
  Fig.~\ref{fig09gauss}, on the right the  solution at the final time 
  in blue together with a fitted ground state in green.  }
 \label{fig09gaussinf}
\end{figure}

The situation is similar for higher nonlinearity. In 
Fig.~\ref{NLSGSa9alpha2_09gaussinf} we show the case $\alpha=2$. On 
the left for $\mu=0.9$, the $L^{\infty}$ norm of the solution appears 
again to show damped oscillations around some asymptotic value, 
presumably a ground state. The solution at the final computed time is 
shown on the right of the same figure together with a fitted 
($b=1.4399$) ground state in green. Though the final state is not yet 
reached, it appears that the soliton resolution conjecture also 
applies to this case. 
\begin{figure}[htb!]
  \includegraphics[width=0.45\textwidth]{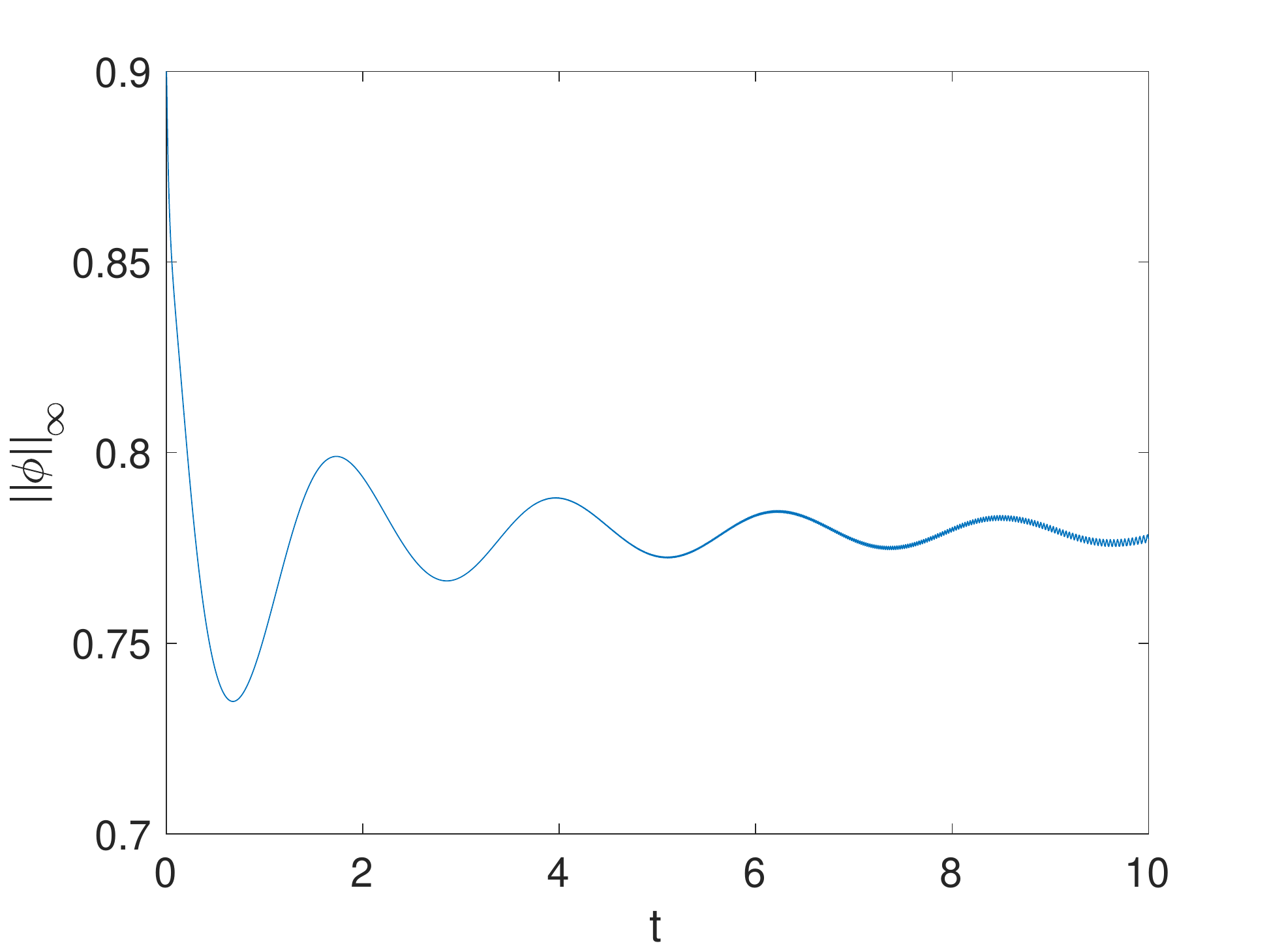}
  \includegraphics[width=0.45\textwidth]{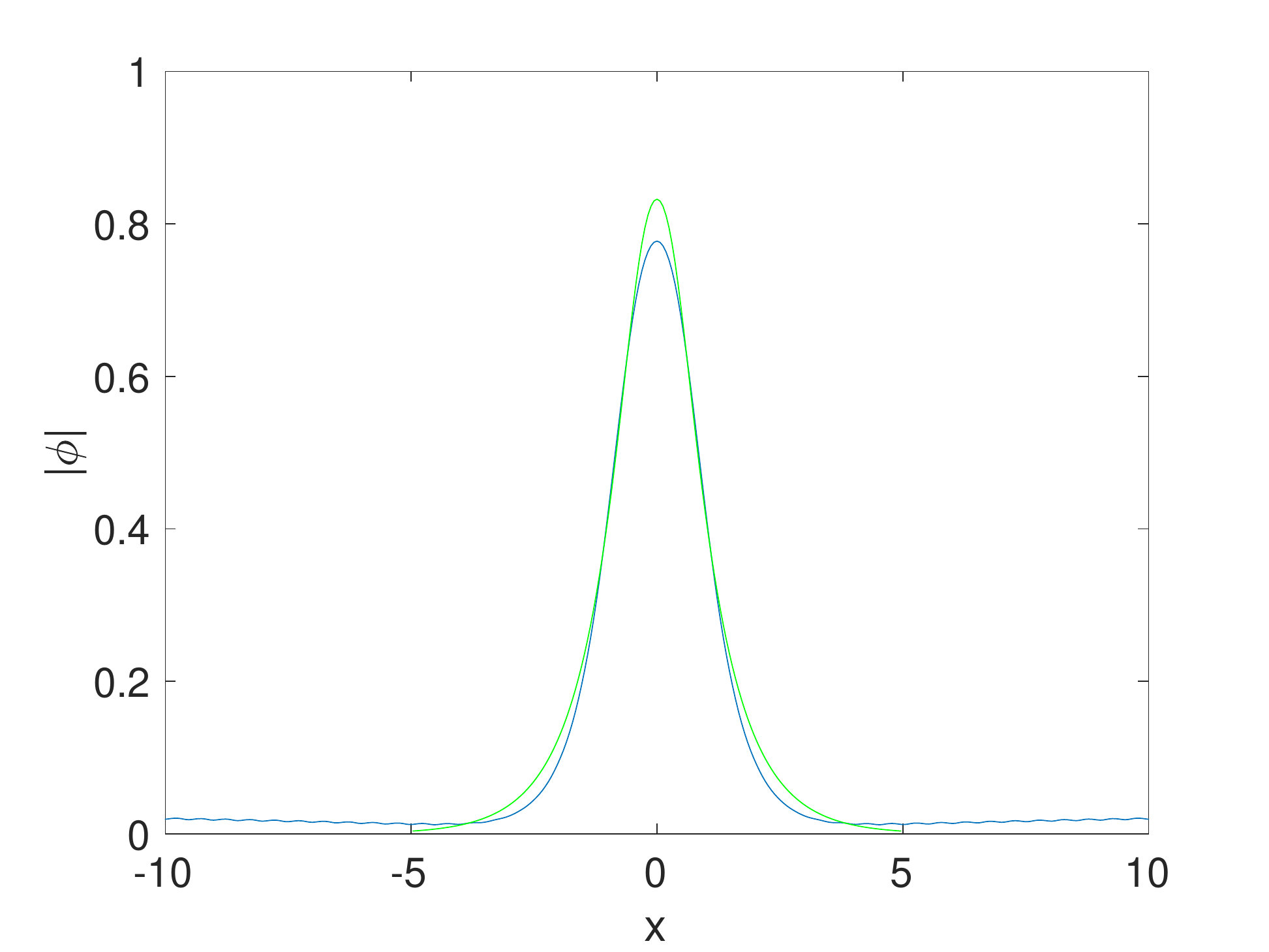}
  \caption{On the left the $L^{\infty}$ norm of the solution of 
  equation (\ref{eqmodtimealpha}) for $\alpha=2$ and the initial data 
  $\phi(x,0)=0.9\exp(-x^{2})$, and the solution for $t=10$ on the 
  right in blue together with some fitted ground state in green.}
 \label{NLSGSa9alpha2_09gaussinf}
\end{figure}

For the even higher nonlinearity $\alpha=3$,  we consider in 
Fig.~\ref{NLSGSa9alpha3_09gaussinf} the case $\mu=0.9$ and show on 
the left the $L^{\infty}$ norm of the solution which once more shows 
damped oscillations around some final state. On the right of the same 
figure we give the solution at the final computed time together with 
a fitted ground state ($b=1.2549$). Once more the fitting is not 
perfect since the final state of the solution is not yet reached, but 
it appears plausible that this final state is indeed a ground state.  
\begin{figure}[htb!]
  \includegraphics[width=0.45\textwidth]{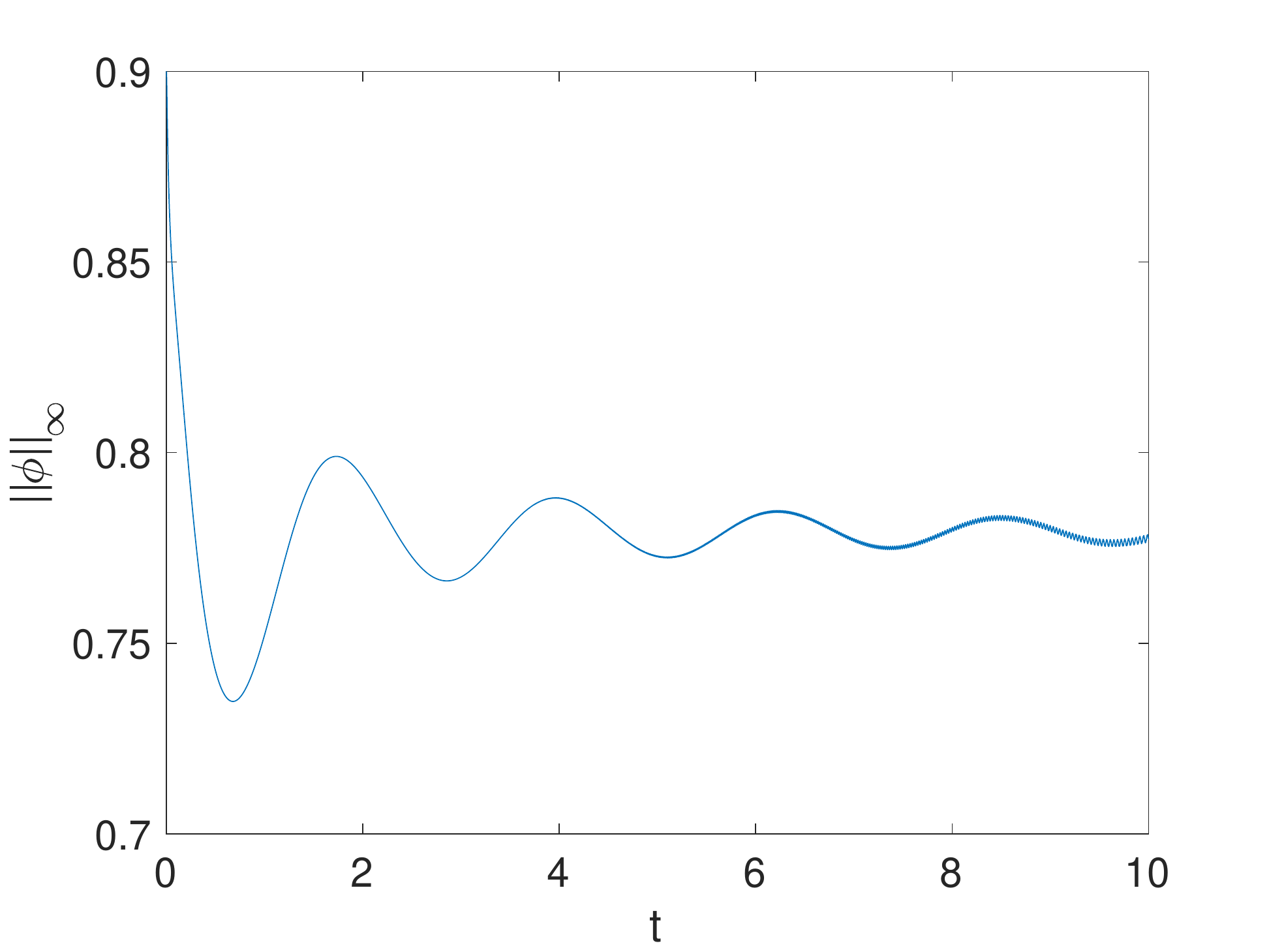}
  \includegraphics[width=0.45\textwidth]{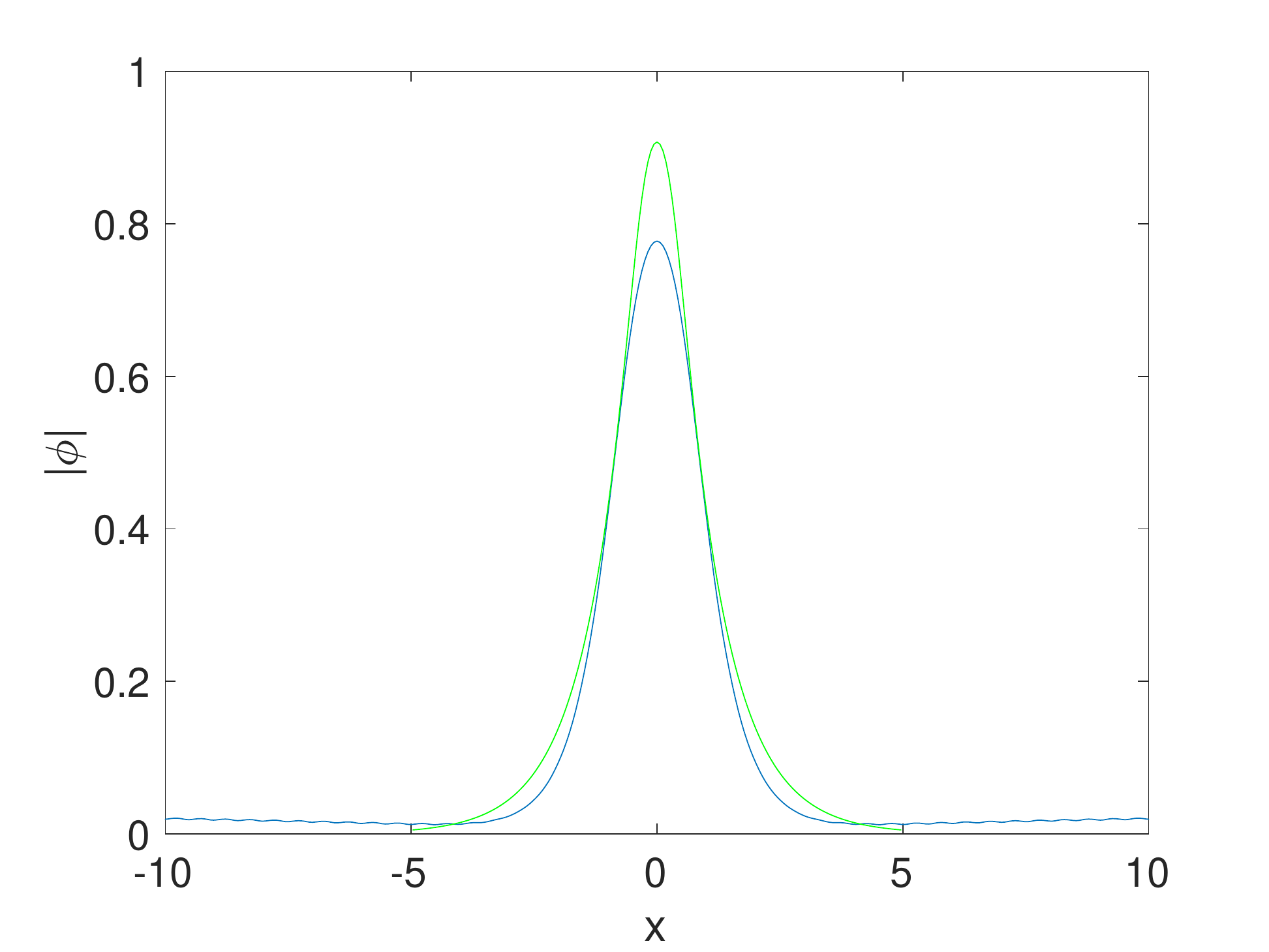}
  \caption{On the left the $L^{\infty}$ norm of the solution of 
  equation (\ref{eqmodtimealpha}) for $\alpha=3$ and the initial data 
  $\phi(x,0)=0.9\exp(-x^{2})$, on the right the solution for $t=10$ 
  in blue  together with some fitted ground state.  }
 \label{NLSGSa9alpha3_09gaussinf}
\end{figure}

\section{Outlook: analysis of the model in higher space dimension}\label{secoutlook}

It seems interesting to investigate the model described in this paper in higher space dimension, $d=3$ being the most relevant case from a physical point of view.

On the one hand, to generalize the model for any dimension $d>1$, one can simply replace $\partial_x$ in equation~\eqref{eqmodtimealpha} by the operator $\nabla$. This leads to a quasilinear Schrödinger equation of the form 
\begin{equation*}
i\partial_t\phi=-\nabla\cdot\left(\frac{\nabla\phi}{1-|\phi|^{2\alpha}}\right)+\alpha|\phi|^{2\alpha-2}\frac{|\nabla\phi|^2}{(1-|\phi|^{2\alpha})^2}\phi-a|\phi|^{2\alpha}\phi
\end{equation*} 
with $\phi\in L^2(\R^d,\C)$.
On the other hand, at least in dimension $d=2$ and $d=3$, another possibility is to formally derive the equation of the model by following the arguments presented in Appendix~\ref{formalderivation} and by taking as starting point the Dirac equation in dimension $1<d\le 3$. This will lead to a slightly more complicated quasilinear Schrödinger equation.            

In both cases, solitary wave solutions for any $\alpha\in \N^*$ are 
expected to exist, and one can investigate their behavior 
analytically and numerically. However, the study of the 
time-dependent equation from an analytical point of view seems much more involved.

From a numerical point of view, a stiff time integrator would be 
recommended for higher dimensions in order to overcome stability 
constraints, in particular if dispersive shock waves are studied 
instead of perturbations of ground states as in the present paper. 
A straightforward, but computationally expensive 
approach would be an implicit scheme, which probably will have to be 
combined with some Newton-Krylov scheme since a standard fixed point 
iteration might not converge except for very small time steps. More interesting would be 
Rosenbrock-type integrators based on a linearization of the equation 
after the spatial discretisation, and an exponential integrator for 
the Jacobian of the resulting system. This can be done efficiently by using 
so-called Leja points, see for instance \cite{leja} and references 
therein.

We leave this generalization to future work.

\begin{appendix}

\section{Formal derivation of the non-relativistic model}\label{formalderivation}

Consider the following nonlinear Dirac equation in one space dimension

\begin{equation}\label{TNLDeq}
i\partial_t \Psi=(-i\sigma_1\partial_x+\sigma_3 m)\Psi-\kappa_1(\langle \sigma_3 \Psi,\Psi\rangle)^\alpha\sigma_3\Psi +\kappa_2 |\Psi|^{2\alpha}\Psi
\end{equation}

with $\kappa_1$ and $\kappa_2$ positive constants and $\alpha\in \NN^*$. Here $\Psi=(\psi,\zeta)$ is a 2-spinor that describes the quantum state of a nucleon of mass $m$, and $\sigma_1$ and $\sigma_3$ are the Pauli matrices given by
\begin{equation*}
\sigma_1=\begin{pmatrix}0 & 1\\ 1 &0\end{pmatrix},
\ \sigma_3=\begin{pmatrix}1 & 0\\ 0 &-1\end{pmatrix} .
\end{equation*}

In nuclear physics, the interesting regime is when the parameters $\kappa_1$ and $\kappa_2$ behave like $m$, whereas $\kappa_1-\kappa_2$ stays bounded. More precisely, let $\kappa_1=\theta m$
and $\kappa_1-\kappa_2=\lambda$ with $\theta$ and $\lambda$ positive constants. As a consequence, the nonlinear Dirac equation~\eqref{TNLDeq} can be written as
\begin{equation}\label{TNLDsys}
\left\{
\begin{aligned}
&i\partial_t\psi=-i\partial_x \zeta+m\psi-\theta m 
(|\psi|^2-|\zeta|^2)^\alpha\psi+(\theta m-\lambda)(|\psi|^2+|\zeta|^2)^\alpha\psi,\\
&i\partial_t\zeta=-i\partial_x\psi-m\zeta+\theta m 
(|\psi|^2-|\zeta|^2)^\alpha\zeta+(\theta m-\lambda)(|\psi|^2+|\zeta|^2)^\alpha\zeta.
\end{aligned}
\right.
\end{equation}
Hence, by writing
$$
\tilde \phi(t,x)=e^{imt}\psi(t,x)\text{ and } \tilde \chi(t,x)=e^{imt}\zeta(t,x),
$$
we obtain 
\begin{equation*}
\left\{
\begin{aligned}
&i\partial_t\tilde \phi=-i\partial_x \tilde \chi+\theta m 
\left(\sum_{k=0}^\alpha{\alpha \choose k}(|\tilde\phi|^2)^{\alpha-k}|\tilde\chi|^{2k}((-1)^{k+1}+1) \right)\tilde\phi-\lambda \left(\sum_{k=0}^\alpha{\alpha \choose k}(|\tilde\phi|^2)^{\alpha-k}|\tilde\chi|^{2k} \right)\tilde\phi,\\
&i\partial_t\tilde \chi=-i\partial_x \tilde\phi-2m\tilde\chi+\theta m 
\left(\sum_{k=0}^\alpha{\alpha \choose k}(|\tilde\phi|^2)^{\alpha-k}|\tilde\chi|^{2k}((-1)^{k}+1) \right)\tilde \chi-\lambda\left(\sum_{k=0}^\alpha{\alpha \choose k}(|\tilde\phi|^2)^{\alpha-k}|\tilde\chi|^{2k} \right)\tilde\chi.
\end{aligned}
\right.
\end{equation*}
As usual, in the non-relativistic regime, the lower spinor $\tilde \chi$ is of order $1/\sqrt{m}$. Hence, we have to perform the following change of scale
$$
\tilde \phi(t,x)=\left(\frac{1}{\theta}\right)^{\frac{1}{2\alpha}}\phi\left(\frac{t}{2},\sqrt{m}x\right)\text{ and } \tilde \chi(t,x)=\left(\frac{1}{\theta}\right)^{\frac{1}{2\alpha}}\frac{1}{2\sqrt{m}}\chi\left(\frac{t}{2},\sqrt{m}x\right)
$$
which leads to 
\begin{equation*}
\left\{
\begin{aligned}
&i\partial_t\phi=-i\partial_x \chi+\alpha(|\phi|^2)^{\alpha-1}|\chi|^2\phi-a|\phi|^{2\alpha}\phi+\frac{1}{m}F(1/m,\phi,\chi)\phi\\
&\frac{1}{m}\frac{1}{4}i\partial_t\chi=-i\partial_x\phi-\chi+|\phi|^{2\alpha}\chi-\frac{1}{m}G(1/m,\phi,\chi)\chi
\end{aligned}
\right.
\end{equation*}
with $a=\frac{2\lambda}{\theta}$, and $F, G$ defined by 
\begin{align*}
F(1/m,\phi,\chi)&=2\sum_{k=2}^\alpha{\alpha \choose k}(|\phi|^2)^{\alpha-k}\frac{|\chi|^{2k}}{2^{2k}m^{k-2}}((-1)^{k+1}+1)-a\sum_{k=1}^\alpha{\alpha \choose k}(|\phi|^2)^{\alpha-k}\frac{|\chi|^{2k}}{2^{2k}m^{k-1}},\\
G(1/m,\phi,\chi)&=\frac{1}{2}\sum_{k=1}^\alpha{\alpha \choose k}(|\phi|^2)^{\alpha-k}\frac{|\chi|^{2k}}{2^{2k}m^{k-1}}((-1)^{k}+1)-\frac{a}{4}\sum_{k=0}^\alpha{\alpha \choose k}(|\phi|^2)^{\alpha-k}\frac{|\chi|^{2k}}{2^{2k}m^{k}}.
\end{align*}

Finally, denoting $\varepsilon=\frac{1}{m}$ the perturbative parameter, we obtain 
\begin{equation}\label{TNLDsyseps}
\left\{
\begin{aligned}
&i\partial_t\phi=-i\partial_x 
\chi+\alpha(|\phi|^2)^{\alpha-1}|\chi|^2\phi-a|\phi|^{2\alpha}\phi+\varepsilon F(\varepsilon,\phi,\chi)\phi,\\
&i\partial_x\phi+(1-|\phi|^{2\alpha})\chi+\varepsilon\frac{1}{4}i\partial_t\chi+\varepsilon G(\varepsilon,\phi,\chi)\chi=0.
\end{aligned}
\right.
\end{equation}

In particular, when $\varepsilon=0$, we have 
\begin{equation}\label{TNLDsysnonrel}
\left\{
\begin{aligned}
&i\partial_t\phi=-i\partial_x 
\chi+\alpha(|\phi|^2)^{\alpha-1}|\chi|^2\phi-a|\phi|^{2\alpha}\phi,\\
&i\partial_x\phi+(1-|\phi|^{2\alpha})\chi=0,
\end{aligned}
\right.
\end{equation}
which leads at least formally to the time-dependent quasilinear Schrödinger equation 
\begin{equation*}
i\partial_t\phi=-\partial_x\left(\frac{\partial_x\phi}{1-|\phi|^{2\alpha}}\right)+\alpha(|\phi|^2)^{\alpha-1}\frac{|\partial_x\phi|^2}{(1-|\phi|^{2\alpha})^2}\phi-a|\phi|^{2\alpha}\phi.
\end{equation*} 

\section{Existence of positive solutions to the stationary equation~\eqref{eqmodstat}}\label{appexistencesolstat}

In this appendix, we prove Theorem~\ref{existence} and we make rigorous the construction of ground states presented in Section~\ref{secgroundstates}.

As in~\cite{EstRot-12}, we write the stationary equation~\eqref{eqmodstat} as a system of first order ODEs

\begin{equation}\label{eqstatsys}
\left\{
\begin{aligned}
&\chi'=\alpha \chi^2\varphi^{2\alpha-1}-a\varphi^{2\alpha+1}+b\varphi\\
&\varphi'= \chi(1-\varphi^{2\alpha})
\end{aligned}
\right.
\end{equation}
for any strictly positive integer $\alpha$.

The existence of solutions to~\eqref{eqstatsys} is an immediate consequence of the Cauchy-Lipschitz theorem. More precisely, we have the following lemma.

\begin{lem} Let $\alpha\in \N^*$ and $(x_0,\chi_0,\varphi_0)\in \R\times\R^2$. For any $a,b>0$, there exist $\gamma>0$ and $(\chi,\varphi)\in \mathcal C^1([x_0-\gamma,x_0+\gamma],\R^2)$ unique solution of~\eqref{eqstatsys} satisfying $\chi(x_0)=\chi_0$, $\varphi(x_0)=\varphi_0$. Moreover, $(\chi,\varphi)$ can be extended on a maximal interval $]X_-,X_+[$ for which the following holds
\begin{enumerate}
\item either $X_+=+\infty$ or $X_+<+\infty$ and $\lim_{x\to X_+}|\chi|+|\varphi|=+\infty$,
\item either $X_-=-\infty$ or $X_->-\infty$ and $\lim_{x\to X_-}|\chi|+|\varphi|=+\infty$.
\end{enumerate} 
\end{lem}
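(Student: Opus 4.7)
The plan is to apply the standard Cauchy--Lipschitz (Picard--Lindelöf) theorem directly to the autonomous system~\eqref{eqstatsys}. Define the vector field $F\colon\R^2\to\R^2$ by
\[
F(\chi,\varphi) = \bigl(\alpha\chi^2\varphi^{2\alpha-1} - a\varphi^{2\alpha+1} + b\varphi,\; \chi(1-\varphi^{2\alpha})\bigr).
\]
Since $\alpha\in\N^*$, each component of $F$ is a polynomial in $(\chi,\varphi)$, so $F$ is of class $\mathcal C^\infty$ on $\R^2$, and in particular locally Lipschitz on any compact subset. This is really the only structural observation one needs.

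I would then invoke the classical Cauchy--Lipschitz theorem in the following form: for any initial datum $(x_0,\chi_0,\varphi_0)\in\R\times\R^2$, there exists $\gamma>0$ and a unique solution $(\chi,\varphi)\in\mathcal C^1([x_0-\gamma,x_0+\gamma],\R^2)$ of~\eqref{eqstatsys} satisfying $\chi(x_0)=\chi_0$, $\varphi(x_0)=\varphi_0$. This is the first assertion. One can, if desired, sketch the contraction argument on $\mathcal C^0([x_0-\gamma,x_0+\gamma],\R^2)$ equipped with a suitable weighted sup-norm, applied to the fixed-point operator associated to the integral formulation of~\eqref{eqstatsys}; the Lipschitz constant of $F$ on any ball $\{|\chi|+|\varphi|\le R\}$ is finite and depends only on $R$ (and on $a,b,\alpha$), which determines the length $\gamma$.

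For the second part, I would use the standard maximal-solution extension theorem for ODEs with locally Lipschitz right-hand side. By gluing local solutions using uniqueness, the solution extends to a unique maximal open interval $]X_-,X_+[\,\ni x_0$. The classical escape-from-compacts criterion then yields the dichotomy: if $X_+<+\infty$, then the trajectory $x\mapsto(\chi(x),\varphi(x))$ must leave every compact subset of $\R^2$ as $x\to X_+$; since the ambient space is $\R^2$ with the Euclidean norm, this is equivalent to $\lim_{x\to X_+}(|\chi(x)|+|\varphi(x)|)=+\infty$. The same argument applies symmetrically at $X_-$.

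Honestly, there is no real obstacle here: the statement is a textbook consequence of the fact that the vector field is polynomial, hence smooth and locally Lipschitz. The only mild point of attention is to verify the blow-up alternative in the form stated (sum of absolute values rather than Euclidean norm), which is immediate since these two quantities are equivalent on $\R^2$. No a priori estimate, no monotonicity argument, and no use of the specific structure of the nonlinearity is required at this stage; the finer analysis (asymptotic behavior, uniqueness of the homoclinic orbit, explicit form) is carried out in subsequent statements of Appendix~\ref{appexistencesolstat} and relies on the conserved quantity~\eqref{eqmodstatint}.
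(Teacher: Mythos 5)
Your proof is correct and follows exactly the route the paper intends: the paper gives no written proof for this lemma, merely noting that it is an immediate consequence of the Cauchy--Lipschitz theorem applied to the polynomial (hence locally Lipschitz) vector field, together with the standard maximal-extension and escape-from-compacts alternative. Your write-up simply makes these textbook steps explicit.
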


Our goal is then to find an initial condition $(\chi_0,\varphi_0)\in \R^2$ such that the unique solution of~\eqref{eqstatsys} satisfying $\chi(x_0)=\chi_0$, $\varphi(x_0)=\varphi_0$ is defined on $\R$ and $\lim_{x\to\pm \infty}(\chi(x),\varphi(x))=(0,0)$.

A key ingredient of the proof is to remark that system~\eqref{eqstatsys} is the Hamiltonian system associated with the energy
\begin{equation}\label{energysys}
H(\chi,\varphi)=\frac{1}{2}\chi^2(1-\varphi^{2\alpha})+\frac{a}{2(\alpha+1)}\varphi^{2\alpha+2}-\frac{b}{2}\varphi^2.
\end{equation}
As a consequence, to have a complete description of the dynamical system, it is enough to analyze the energy levels of~\eqref{energysys}, \emph{i.e.} the curves in the $(\chi,\varphi)$-plane defined by $\Gamma_c=\{(\chi, \varphi)| H(\chi,\varphi)=c\}$.

\begin{lem}\cite[for $\alpha=1$]{EstRot-12} Let $\alpha\in \N^*$. For any $a,b>0$, $H$ has the following properties:
\begin{enumerate}
\item if $a-b>0$,
\begin{enumerate}
\item $\nabla H(\chi,\varphi)=0$ if and only if $(\chi,\varphi)=(0,0)$, $(\chi,\varphi)=(0,\pm \left(\frac{b}{a}\right)^{1/2\alpha})$, $(\chi,\varphi)=(\pm\sqrt{\frac{a-b}{\alpha}},1)$ or if $(\chi,\varphi)=(\pm\sqrt{\frac{a-b}{\alpha}},-1)$;
\item $(\chi,\varphi)=(0,\pm \left(\frac{b}{a}\right)^{1/2\alpha})$ are local minima, and $(\chi,\varphi)=(0,0)$, $(\chi,\varphi)=(\pm\sqrt{\frac{a-b}{\alpha}},1)$ and $(\chi,\varphi)=(\pm\sqrt{\frac{a-b}{\alpha}},-1)$ are saddle points of the energy $H$.
\end{enumerate}
\item if $a-b=0$, $\nabla H(\chi,\varphi)=0$ if and only if $(\chi,\varphi)=(0,0)$ or $(\chi,\varphi)=(0,\pm 1)$.
\item if $a-b<0$,
\begin{enumerate}
\item $\nabla H(\chi,\varphi)=0$ if and only if $(\chi,\varphi)=(0,0)$ or $(\chi,\varphi)=(0,\pm \left(\frac{b}{a}\right)^{1/2\alpha})$;
\item $(\chi,\varphi)=(0,0)$ and $(\chi,\varphi)=(0,\pm \left(\frac{b}{a}\right)^{1/2\alpha})$ are saddle points of the energy $H$.
\end{enumerate}
\end{enumerate}

\end{lem}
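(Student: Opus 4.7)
\medskip

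\noindent\textbf{Proof plan.} The plan is to compute $\nabla H$ explicitly, classify its zeros by a simple case distinction, and then read off the nature of each critical point from the Hessian.

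\smallskip

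First I would compute
\[
\partial_\chi H(\chi,\varphi)=\chi(1-\varphi^{2\alpha}),\qquad
\partial_\varphi H(\chi,\varphi)=-\alpha\chi^2\varphi^{2\alpha-1}+a\varphi^{2\alpha+1}-b\varphi.
\]
The equation $\partial_\chi H=0$ forces either $\chi=0$ or $\varphi^{2\alpha}=1$, i.e. $\varphi=\pm 1$. I treat the two cases separately.

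\smallskip

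If $\chi=0$, the second equation reduces to $\varphi(a\varphi^{2\alpha}-b)=0$, so that $\varphi=0$ or $\varphi=\pm(b/a)^{1/(2\alpha)}$; these are well-defined real numbers for any $a,b>0$. If $\varphi=\pm 1$, the second equation simplifies (after factoring $\pm 1$) to $a-b-\alpha\chi^2=0$, which admits real solutions $\chi=\pm\sqrt{(a-b)/\alpha}$ exactly when $a\ge b$, the root being $\chi=0$ in the borderline case $a=b$. Collecting these, I obtain precisely the lists of critical points in \textit{i.(a)}, \textit{ii}, \textit{iii.(a)} — the only mildly subtle point being that for $a=b$ one has $(b/a)^{1/(2\alpha)}=1$, so $(0,\pm(b/a)^{1/(2\alpha)})$ and $(\pm\sqrt{(a-b)/\alpha},\pm 1)$ all collapse onto the two points $(0,\pm 1)$, which explains the degeneracy of the count in case \textit{ii}.

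\smallskip

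Next I would classify the critical points in cases \textit{i} and \textit{iii} by computing the Hessian
\[
\partial_{\chi\chi}H=1-\varphi^{2\alpha},\quad
\partial_{\chi\varphi}H=-2\alpha\chi\varphi^{2\alpha-1},\quad
\partial_{\varphi\varphi}H=-\alpha(2\alpha-1)\chi^2\varphi^{2\alpha-2}+a(2\alpha+1)\varphi^{2\alpha}-b,
\]
and evaluating at each point. At $(0,0)$ the Hessian is diagonal with entries $1$ and $-b$, hence indefinite: a saddle, regardless of the sign of $a-b$. At $(0,\pm(b/a)^{1/(2\alpha)})$ the Hessian is again diagonal, with entries $1-b/a$ and $2\alpha b$: when $a>b$ both eigenvalues are strictly positive (local minimum, as in \textit{i.(b)}), while when $a<b$ the first eigenvalue is negative and the second positive (saddle, as in \textit{iii.(b)}). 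Finally, at $(\pm\sqrt{(a-b)/\alpha},\pm 1)$ with $a>b$ the diagonal entry $\partial_{\chi\chi}H$ vanishes while $\partial_{\chi\varphi}H=\mp 2\alpha\sqrt{(a-b)/\alpha}\ne 0$, so the determinant of the Hessian equals $-(\partial_{\chi\varphi}H)^2<0$ and these are saddles.

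\smallskip

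No step is really an obstacle: the only point that requires some care is bookkeeping of signs and the coincidence of the various critical points in the borderline case $a=b$, where the Hessian at $(0,\pm 1)$ is degenerate but no further classification is claimed in the statement. Everything else is a direct algebraic computation.
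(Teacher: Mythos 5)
Your computation is correct and complete: the gradient and Hessian of $H$ are computed accurately, the case distinction $\chi=0$ versus $\varphi^{2\alpha}=1$ exhausts all critical points, and the classification (including the degenerate off-diagonal argument at $(\pm\sqrt{(a-b)/\alpha},\pm1)$ and the collapse of critical points when $a=b$) matches the statement. The paper itself gives no proof of this lemma — it cites \cite{EstRot-12} for $\alpha=1$ and states the generalization without argument — and your direct verification is exactly the intended elementary computation.
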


\begin{rem}\label{solequiv0} Let $\alpha\in \N^*$, $a,b>0$ and $(\chi,\varphi)$ a continuous solution of~\eqref{eqstatsys}, if there exists $x_0\in \R$ such that $(\chi(x_0), \varphi(x_0))=(0,0)$ then $(\chi(x), \varphi(x))\equiv (0,0)$. 
\end{rem}

\begin{rem}\label{solequiv1} Let $\alpha\in \N^*$, $a,b>0$ and $x_0, \chi_0 \in \R$. If $(\chi,\varphi)$ is the unique continuous solution of~\eqref{eqstatsys} satisfying $\chi(x_0)=\chi_0$, $\varphi(x_0)=1$ (resp. $\varphi(x_0)=-1$) then $\varphi(x)\equiv 1$ (resp. $\varphi(x)\equiv -1$). 
\end{rem}

As a consequence of Remark~\ref{solequiv1}, we have the following lemma.

\begin{lem}\label{lemsolsmall1} Let $\alpha\in \N^*$, $a,b>0$ and $(\chi,\varphi)$ a continuous solution of~\eqref{eqstatsys} defined on $\R$ such that $\lim_{x\to \pm \infty} (\chi(x),\varphi(x))=(0,0)$. Then $\varphi^2(x)<1$ for all $x\in \R$.
\end{lem}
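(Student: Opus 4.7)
The plan is to argue by contradiction using the uniqueness statement already recorded in Remark~\ref{solequiv1}. Suppose there exists $x_0\in\R$ with $\varphi(x_0)^2=1$, so that $\varphi(x_0)=\varepsilon\in\{+1,-1\}$, and set $\chi_0:=\chi(x_0)$. Then $(\chi,\varphi)$ is a continuous (in fact $\mathcal C^1$) solution of~\eqref{eqstatsys} with initial data $(\chi(x_0),\varphi(x_0))=(\chi_0,\varepsilon)$. By the Cauchy--Lipschitz uniqueness applied in Remark~\ref{solequiv1}, the second component must be identically equal to $\varepsilon$ on the whole maximal interval of existence, which here is all of $\R$ by assumption. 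Thus $\varphi\equiv \varepsilon$ on $\R$.

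This immediately contradicts the hypothesis $\lim_{x\to\pm\infty}\varphi(x)=0$, since a constant function equal to $\pm 1$ does not tend to $0$. Hence no such $x_0$ exists, i.e.\ $\varphi(x)^2\neq 1$ for every $x\in\R$.

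It remains to upgrade this to the strict inequality $\varphi(x)^2<1$. Since $\varphi$ is continuous and $\varphi(x)\to 0$ at $\pm\infty$, there is some point (in fact all sufficiently large $|x|$) where $\varphi^2<1$. If $\varphi(x_1)^2>1$ held at some $x_1$, the intermediate value theorem applied to the continuous function $x\mapsto\varphi(x)^2-1$ between $x_1$ and a point where $\varphi^2<1$ would produce a zero, contradicting the previous step. Therefore $\varphi(x)^2<1$ for every $x\in\R$.

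I do not expect a real obstacle in this argument: the whole content is the rigidity of the invariant lines $\{\varphi=\pm 1\}$ for the flow of~\eqref{eqstatsys}, which has already been packaged as Remark~\ref{solequiv1}. The only point that needs a word of care is the passage from ``$\varphi^2\neq 1$ everywhere'' to ``$\varphi^2<1$ everywhere'', which is just a connectedness/IVT argument using the boundary behaviour at infinity.
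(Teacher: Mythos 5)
Your argument is correct and is essentially the paper's own proof: assume $\varphi(x_0)=\pm1$, invoke Remark~\ref{solequiv1} to force $\varphi\equiv\pm1$, and contradict the decay at infinity. Your explicit intermediate-value step ruling out $\varphi^2>1$ is a small point the paper leaves implicit, but it is the same approach.
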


\begin{proof} Let $(\chi,\varphi)$ a continuous solution of~\eqref{eqstatsys}  converging to $(0,0)$ as $x\to \pm\infty$. Then $\forall\varepsilon>0$, $\exists \delta_\varepsilon>0$ such that $|\varphi(x)|<\varepsilon$ for all $x>\delta_\varepsilon$. 

Suppose, by contradiction, that there exists $x_0\in \R$ such that $\varphi(x_0)= 1$ (resp. $\varphi(x_0)= -1$). Then Remark~\ref{solequiv1} implies $\varphi(x)\equiv 1$ (resp. $\varphi(x)\equiv -1$), a contradiction.

\end{proof}

Next, since we are interested in solutions of~\eqref{eqstatsys}  converging to $(0,0)$ as $x\to \pm\infty$, we consider the zero level set of H defined by~\eqref{energysys}. The curve $\Gamma_0=\{(\chi, \varphi)| H(\chi,\varphi)=0\}$ is an algebraic curve of degree $2\alpha+2$ defined by
\begin{equation}\label{zerolevelset}
\chi^2(1-\varphi^{2\alpha})+\frac{a}{\alpha+1}\varphi^{2\alpha+2}-b\varphi^2=0
\end{equation}
and its behavior depends on the parameters $a$, $b$ and $\alpha$. Moreover in the region $\{(\chi,\varphi)\in \R^2, \varphi^2<1\}$, the equation of $\Gamma_0$ can be written as 
\begin{equation}\label{zerolevelsetbis}
\chi^2=\varphi^2\frac{b-\frac{a}{\alpha+1}\varphi^{2\alpha}}{1-\varphi^{2\alpha}}.
\end{equation}

The following two lemmas show the nonexistence of positive solutions of~\eqref{eqmodstat} that vanishes at $\pm \infty$ whenever $0<a\le (\alpha+1)b$.

\begin{lem} Let $\alpha\in \N^*$ and $a,b>0$ such that $a<(\alpha+1)b$. Let $(\chi,\varphi)$ be the solution of~\eqref{eqstatsys} satisfying $\chi(x_0)=\chi_0$, $\varphi(x_0)=\varphi_0$. If $(\chi_0, \varphi_0)\neq (0,0)$ there is no solution that satisfies $\lim_{x\to \pm \infty} (\chi(x),\varphi(x))=(0,0)$.
\end{lem}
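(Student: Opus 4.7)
The plan is to combine the Hamiltonian structure of~\eqref{eqstatsys} with a monotonicity argument that rules out a homoclinic orbit to the saddle $(0,0)$. First, I would use that $H$ defined by~\eqref{energysys} is conserved along any trajectory and is continuous: the hypothesis $(\chi(x),\varphi(x)) \to (0,0)$ as $x \to \pm\infty$ then forces $H(\chi(x),\varphi(x)) \equiv H(0,0) = 0$, so the whole orbit lies on the zero level set $\Gamma_0$. Applying Lemma~\ref{lemsolsmall1} also gives $\varphi(x)^2 < 1$ for every $x$, so the orbit in fact lies in $\Gamma_0 \cap \{\varphi^2 < 1\}$, which by~\eqref{zerolevelsetbis} is described by $\chi^2 = \varphi^2\, h(\varphi)$ with $h(\varphi) := \bigl(b - \tfrac{a}{\alpha+1}\varphi^{2\alpha}\bigr)/(1-\varphi^{2\alpha})$.

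Second, I would exploit the hypothesis $a < (\alpha+1)b$, which makes both the numerator and the denominator of $h$ strictly positive for $\varphi^2 < 1$, so $h(\varphi) > 0$ throughout. Hence on $\Gamma_0 \cap \{\varphi^2 < 1\}$ the identity $\chi = 0$ is equivalent to $\varphi = 0$, and Remark~\ref{solequiv0} forbids the orbit from ever meeting $(0,0)$ at a finite $x$, since $(\chi_0,\varphi_0) \neq (0,0)$. By continuity, the orbit is therefore confined to a single connected component of $\{(\chi,\varphi) : \chi \neq 0,\ \varphi \neq 0,\ \varphi^2 < 1\}$, that is, to one of four open quadrants in which the signs of $\chi$ and $\varphi$ are constant.

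Third, I would use the second equation of~\eqref{eqstatsys}: since $\mathrm{sgn}(\chi)$ is constant and $1 - \varphi^{2\alpha} > 0$ throughout the orbit, $\varphi'(x) = \chi(x)(1-\varphi(x)^{2\alpha})$ has a fixed sign on all of $\R$, so $\varphi$ is strictly monotonic. But then the two-sided limit $\varphi(x) \to 0$ as $x \to \pm\infty$ forces $\varphi \equiv 0$, contradicting the non-triviality of the orbit.

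The only substantive obstacle is really just verifying the sign of $h$; geometrically, $a < (\alpha+1)b$ is precisely the condition that keeps the numerator of $h$ from vanishing inside $\{\varphi^2 \leq 1\}$, which is what prevents the stable and unstable branches of $(0,0)$ in $\Gamma_0$ from meeting to form a homoclinic loop. Everything else is straightforward bookkeeping about signs.
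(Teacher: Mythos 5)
Your argument is correct and rests on exactly the same ingredients as the paper's proof: conservation of $H$ forces the orbit onto $\Gamma_0$, Lemma~\ref{lemsolsmall1} gives $\varphi^2<1$, and the condition $a<(\alpha+1)b$ makes the factor $\bigl(b-\tfrac{a}{\alpha+1}\varphi^{2\alpha}\bigr)/(1-\varphi^{2\alpha})$ in~\eqref{zerolevelsetbis} strictly positive, so that $\chi=0$ iff $\varphi=0$ along the orbit. The paper extracts the contradiction by locating an extremum $\tilde x$ of $\varphi$ (where necessarily $\chi(\tilde x)=0$, hence $\varphi(\tilde x)=0$ and the orbit is trivial by Remark~\ref{solequiv0}), whereas you argue the contrapositive way via strict monotonicity of $\varphi$; this is only a cosmetic difference.
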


\begin{proof} Since $\varphi$ is a $\mathcal C^1$ function such that $\lim_{x\to\pm \infty}\varphi(x)=0$, there exits $\tilde x\in \R$ such that $\varphi'(\tilde x)=0$. Moreover, thanks to Lemma~\ref{lemsolsmall1}, $\varphi^2(\tilde x)<1$. 
Hence, since $\varphi$ is a solution to~\eqref{eqstatsys}, we deduce $\chi(\tilde x)=0$ that leads to 
$$
\varphi^2(\tilde x)\frac{b-\frac{a}{\alpha+1}\varphi^{2\alpha}(\tilde x)}{1-\varphi^{2\alpha}(\tilde x)}=0
$$
using~\eqref{zerolevelsetbis}.

Now, $\varphi^{2\alpha}(\tilde x)<1<\frac{(\alpha+1)b}{a}$. Then $\varphi(\tilde x)=0$ and it follows from Remark~\eqref{solequiv0}, that $(\chi(x),\varphi(x))\equiv (0,0)$. This contradicts $(\chi_0, \varphi_0)\neq (0,0)$.

\end{proof}

\begin{lem} Let $\alpha\in \N^*$ and $a,b>0$ such that $a=(\alpha+1)b$. Let $(\chi,\varphi)$ be the solution of~\eqref{eqstatsys} satisfying $\chi(x_0)=\chi_0$, $\varphi(x_0)=\varphi_0$. If $(\chi_0, \varphi_0)\neq (0,0)$ there is no solution that satisfies $\lim_{x\to \pm \infty} (\chi(x),\varphi(x))=(0,0)$. 
\end{lem}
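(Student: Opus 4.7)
The plan is to exploit the Hamiltonian structure of~\eqref{eqstatsys} together with an algebraic degeneracy of the zero energy level set $\Gamma_0 = \{H = 0\}$ that occurs precisely when $a = (\alpha+1)b$.

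First I would observe that, since $H$ is conserved along solutions and $H(0,0) = 0$, any trajectory with $(\chi,\varphi) \to (0,0)$ at $\pm \infty$ must lie entirely in $\Gamma_0$, and that Lemma~\ref{lemsolsmall1} gives $\varphi^2(x) < 1$ for all $x \in \R$. Next I would exploit the special value $a = (\alpha+1)b$ to factor~\eqref{zerolevelset}: a direct substitution rewrites it as $(1-\varphi^{2\alpha})(\chi^2 - b\varphi^2) = 0$, so in the region $\{\varphi^2 < 1\}$ the trajectory satisfies $\chi^2 = b\varphi^2$. Combined with Remark~\ref{solequiv0}, which rules out $(\chi(x_0),\varphi(x_0)) = (0,0)$ at any finite $x_0$ for a nontrivial solution, continuity then forces $\varphi$ to keep a fixed sign and $\chi/\varphi$ to be identically $\epsilon \sqrt{b}$ for some $\epsilon \in \{-1,+1\}$.

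Finally I would plug $\chi = \epsilon \sqrt{b}\,\varphi$ back into the second equation of~\eqref{eqstatsys} to obtain $\varphi' = \epsilon\sqrt{b}\,\varphi(1-\varphi^{2\alpha})$, whose right-hand side has constant sign on the trajectory since $0 < \varphi^2 < 1$. Hence $\varphi$ is strictly monotone on $\R$, which is incompatible with $\varphi(x) \to 0$ at both $\pm \infty$; this contradicts $(\chi_0,\varphi_0) \neq (0,0)$.

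The key conceptual step, and the one I expect to be the main obstacle if missed, is recognizing the factorization of $\Gamma_0$ at the threshold value $a = (\alpha+1)b$: the homoclinic loop to the saddle $(0,0)$ that exists for $a > (\alpha+1)b$ and produces the explicit ground state~\eqref{gsexplicit} breaks open into heteroclinic connections between $(0,0)$ and the saddle points $(\pm \sqrt{b},\pm 1)$. Once this geometric picture is in hand, the rest of the argument is a routine phase-plane computation.
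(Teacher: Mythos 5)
Your proof is correct and follows essentially the same route as the paper: conservation of $H$ places the trajectory on $\Gamma_0$, which at the threshold $a=(\alpha+1)b$ factors as $(1-\varphi^{2\alpha})(\chi^2-b\varphi^2)=0$, so that in the region $\varphi^2<1$ (Lemma~\ref{lemsolsmall1}) one gets $\chi=\pm\sqrt{b}\,\varphi$ along the whole orbit. Your concluding step via the reduced equation $\varphi'=\epsilon\sqrt{b}\,\varphi(1-\varphi^{2\alpha})$ and strict monotonicity is just a slightly more explicit version of the paper's observation that any such nontrivial orbit is a heteroclinic connection between $(0,0)$ and $(\pm\sqrt{b},\pm1)$, hence cannot tend to $(0,0)$ at both ends.
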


\begin{proof} If  $a=(\alpha+1)b$, the curve $\Gamma_0$ is defined as 
\begin{equation}\label{zerolevelsetspecial}
(1-\varphi^{2\alpha})(\chi^2-b\varphi^2)=0
\end{equation}
and zero contour line is represented in Fig.~\ref{figzerolevelset}.
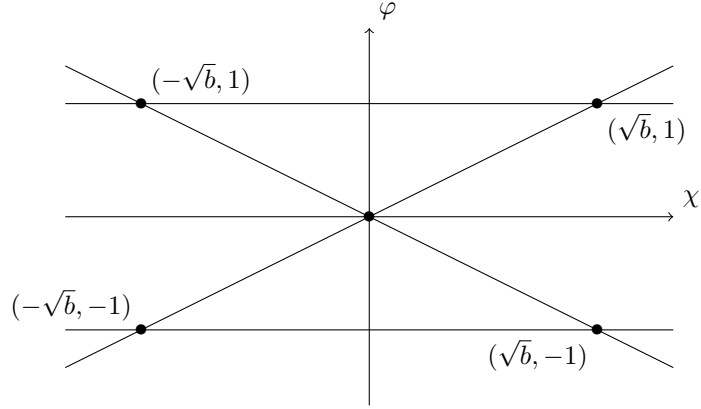
\begin{figure}[h]
\begin{center}
	\begin{tikzpicture}
	\node (O) at (0,0) {};
	\draw[->] (0,-2.5) to (0,2.5);
	\draw[->] (-4,0) to (4,0);
	\node[above right] at (4,0) {$\chi$};
	\node[above right] at (0,2.5) {$\varphi$};
	\draw[-] (-4,1.5) to (4,1.5);
	\draw[-] (-4,-1.5) to (4,-1.5);
	\draw[-] (-4,2) to (4,-2);
	\draw[-] (-4,-2) to (4,2);
	\draw (0,0) node {$\bullet$};
	\draw (-3,1.5) node {$\bullet$};
	\node[above right] at  (-3,1.5)  {$(-\sqrt{b},1)$};
	\draw (3,1.5) node {$\bullet$};
	\node[below right] at  (3,1.5)  {$(\sqrt{b},1)$};
	\draw (-3,-1.5) node {$\bullet$};
	\node[above left] at  (-3,-1.5)  {$(-\sqrt{b},-1)$};
	\draw (3,-1.5) node {$\bullet$};
	\node[below left] at  (3,-1.5)  {$(\sqrt{b},-1)$};
	\end{tikzpicture}
\end{center}
\caption{$\Gamma_0$ when $a=(\alpha+1)b$}
\label{figzerolevelset}
\end{figure}
Recall that $(\chi(x),\varphi(x))\equiv (0,0)$, $(\chi(x),\varphi(x))\equiv (\pm \sqrt{b},1)$ and $(\chi(x),\varphi(x))\equiv (\pm \sqrt{b},-1)$ are solutions to~\eqref{eqstatsys} such that $H(\chi,\varphi)=0$.

Since we are interested in solutions of~\eqref{eqstatsys}  converging to $(0,0)$ as $x\to \pm\infty$, we can restrict our study to the region $\{(\chi,\varphi)| \varphi^2<1\}$. Moreover, because of the symmetries of the problem, it is enough to look for $\varphi\ge 0$. Then, equation~\eqref{zerolevelsetspecial} implies that 
$\chi(x)=\sqrt{b}\varphi(x)$ or $\chi(x)=-\sqrt{b}\varphi(x)$. In both cases, the solution $(\chi,\varphi)$ is defined for all $x\in\R$ since $|\varphi|+|\chi|<+\infty$. Finally, since $(\chi, \varphi)$ has to converge to one of the critical points of $H$ as $x$ goes to $\pm \infty$, we can conclude that either
\begin{equation*}
\lim_{x\to -\infty} (\chi(x),\varphi(x))=(0,0) \text{ and } \lim_{x\to +\infty} (\chi(x),\varphi(x))=(\sqrt{b},1)
\end{equation*}
or
\begin{equation*}
\lim_{x\to -\infty} (\chi(x),\varphi(x))=(-\sqrt{b},1) \text{ and } \lim_{x\to -\infty} (\chi(x),\varphi(x))=(0,0).
\end{equation*}
As a consequence, the unique solution of~\eqref{eqstatsys}  that satisfies $\lim_{x\to \pm \infty} (\chi(x),\varphi(x))=(0,0)$ is $(\chi(x),\varphi(x))\equiv (0,0)$.

\end{proof}

Hence, let $a>(\alpha+1)b$. In this case we are able to prove the existence and uniqueness (modulo translations) of a positive solution $\varphi$ to of~\eqref{eqmodstat} such that $\lim_{x\to\pm\infty}\varphi(x)=0$.

\begin{lem}\label{existencesolstat} Let $\alpha\in \N^*$ and $a,b>0$ such that $a>(\alpha+1)b$. Then there is a unique (modulo translations) solution $(\chi,\varphi)$ of~\eqref{eqstatsys} that satisfies $0<\varphi(x)<1$ for all $x\in \R$ and
$\lim_{x\to \pm \infty} (\chi(x),\varphi(x))=(0,0)$. Moreover, there exists $x_0\in \R$ such that $\varphi$ is symmetric with respect to $\{x=x_0\}$ and strictly decreasing for all $x>x_0$. 
\end{lem}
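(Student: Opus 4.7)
The plan is to exploit the Hamiltonian structure of the ODE system, using conservation of $H$ in~\eqref{energysys} to reduce the search to orbits on the zero level set $\Gamma_0$. Since we seek a positive solution with $\lim_{x\to\pm\infty}(\chi,\varphi)=(0,0)$, the orbit must lie on $\Gamma_0$, and by Lemma~\ref{lemsolsmall1} it stays in the strip $\{0<\varphi<1\}$. There, the zero level set is given by $\chi^2=\varphi^2\frac{b-\frac{a}{\alpha+1}\varphi^{2\alpha}}{1-\varphi^{2\alpha}}$, which is well defined and positive precisely for $0<\varphi<\varphi_m:=\left(\frac{(\alpha+1)b}{a}\right)^{1/2\alpha}$, and under the assumption $a>(\alpha+1)b$ we have $\varphi_m<1$. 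The set $\Gamma_0\cap\{0<\varphi<1\}$ is thus a compact ``homoclinic loop'' based at the saddle $(0,0)$, symmetric about the $\varphi$--axis, with rightmost/leftmost points on the horizontal line $\varphi=\varphi_m$ where $\chi=0$.

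For existence, I would take as initial condition $\chi(x_0)=0$, $\varphi(x_0)=\varphi_m$ at $x_0=0$ and solve~\eqref{eqstatsys} via Cauchy-Lipschitz. The conservation of $H$ keeps the orbit on the bounded curve $\Gamma_0$, so the maximal interval is all of $\R$. A direct computation gives $\varphi'(0)=0$ and $\varphi''(0)=-\alpha b\,\varphi_m(1-\varphi_m^{2\alpha})<0$, so $\varphi'(x)<0$ for small $x>0$. I would then argue monotonicity by contradiction: if $\varphi'$ vanished again at some first $x^*>0$, then $\chi(x^*)=0$ (since $\varphi(x^*)<1$), and the equation of $\Gamma_0$ would force $\varphi(x^*)\in\{0,\varphi_m\}$; both options are excluded by uniqueness of the Cauchy problem (the first gives the trivial orbit, the second yields a contradiction with $\varphi(x^*)<\varphi(0)$ combined with strict decrease on $(0,x^*)$). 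Hence $\varphi$ is strictly decreasing on $(0,\infty)$, bounded below by $0$, and therefore converges to a limit $\varphi_\infty\ge 0$; the relation $\chi^2=\varphi^2\frac{b-\frac{a}{\alpha+1}\varphi^{2\alpha}}{1-\varphi^{2\alpha}}$ forces $\chi$ to converge as well, and the limit $(\chi_\infty,\varphi_\infty)$ must be an equilibrium of~\eqref{eqstatsys} lying on $\Gamma_0$; the only such equilibrium in the closure of our region is $(0,0)$.

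Symmetry is then immediate: the system~\eqref{eqstatsys} is invariant under $(x,\chi,\varphi)\mapsto(-x,-\chi,\varphi)$, so reflecting the forward orbit produces the backward orbit with the same data at $x=0$, giving $\varphi(-x)=\varphi(x)$ and the corresponding convergence as $x\to-\infty$. For uniqueness modulo translation, I would start from any solution with $0<\varphi<1$ and $\lim_{x\to\pm\infty}(\chi,\varphi)=(0,0)$. Such a $\varphi$ is continuous, positive, and vanishes at infinity, so it attains a global maximum at some $x_0\in\R$. Since $\varphi(x_0)<1$, the equation $\varphi'(x_0)=\chi(x_0)(1-\varphi(x_0)^{2\alpha})=0$ forces $\chi(x_0)=0$, and since $H\equiv 0$ along the orbit, the relation $\frac{a}{\alpha+1}\varphi(x_0)^{2\alpha+2}=b\,\varphi(x_0)^2$ combined with $\varphi(x_0)>0$ yields $\varphi(x_0)=\varphi_m$. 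Cauchy-Lipschitz then identifies the orbit with the one constructed above, up to the translation $x\mapsto x-x_0$.

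The main obstacle I anticipate is the rigorous proof that the trajectory starting at $(0,\varphi_m)$ actually converges to the saddle $(0,0)$ rather than returning to $(0,\varphi_m)$ and forming a periodic orbit; this is the step where the geometry of $\Gamma_0$ (having $(0,0)$ and $(0,\varphi_m)$ as its only crossings with the $\varphi$--axis in the relevant region) must be used in tandem with the ODE uniqueness and the sign of $\chi'(0)$ to rule out recurrence. The other steps are mostly bookkeeping once the phase portrait on $\Gamma_0$ is correctly identified.
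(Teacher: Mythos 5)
Your argument is correct and follows essentially the same route as the paper's proof: phase-plane analysis of the Hamiltonian system \eqref{eqstatsys} restricted to the zero level set, launching the orbit from $(0,\varphi_m)$ with $\varphi_m=\left(\tfrac{(\alpha+1)b}{a}\right)^{1/2\alpha}$, monotonicity of $\varphi$ away from $x_0$, convergence to the saddle $(0,0)$ by elimination of the other critical points, reflection symmetry, and uniqueness via the location of the maximum. The only wording to tighten is the global-existence step: $\Gamma_0$ itself is unbounded, so one should say the orbit stays in the bounded connected component of $\Gamma_0\cap\{\varphi^2<1\}$ (no point of $\Gamma_0$ has $\varphi^2=1$ since $H(\chi,\pm1)=\tfrac{a}{2(\alpha+1)}-\tfrac{b}{2}>0$ when $a>(\alpha+1)b$), which is how the paper secures boundedness via Remark~\ref{solequiv1} and the resulting bound $\chi^2<\tfrac{a}{\alpha+1}$.
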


\begin{proof} If $(\chi,\varphi)$ is a solution of~\eqref{eqstatsys} that satisfies $\varphi(x)>0$ for all $x\in \R$ and
$\lim_{x\to \pm \infty} (\chi(x),\varphi(x))=(0,0)$, then there exists $x_0 \in \R$ such that $\varphi'(x_0)=0$ and $0<\varphi^2(x_0)<1$. 
Hence, $\chi(x_0)=0$ and, thanks to~\eqref{zerolevelsetbis}, $\varphi(x_0)=\left(\frac{(\alpha+1)b}{a}\right)^{\frac{1}{2\alpha}}<1$.

Let $(\chi_0,\varphi_0)=(0,\left(\frac{(\alpha+1)b}{a}\right)^{\frac{1}{2\alpha}})$ and $(\chi,\varphi)$ the unique solution of~\eqref{eqstatsys} satisfying $\chi(x_0)=\chi_0$ and $\varphi(x_0)=\varphi_0$. Our goal is to show $\varphi(x)>0$ for all $x\in \R$ and $\lim_{x\to \pm \infty} (\chi(x),\varphi(x))=(0,0)$.

First of all, we show that $(\chi,\varphi)$ is defined for all $x\in \R$. Indeed, let $]X_-,X_+[$ the maximal interval on which the solution $(\chi,\varphi)$ is defined. Thanks to Remark~\ref{solequiv1}, we have $\varphi^2(x)<1$ for all $x\in ]X_-,X_+[$. This, together with~\eqref{zerolevelsetbis} and the fact that $b<\frac{a}{\alpha+1}$, implies $\chi^2(x)<\frac{a}{\alpha+1}$ for all $x\in]X_-,X_+[$. As a consequence, $]X_-,X_+[=\R$.

Next, we remark that for any $x>x_0$, $\chi(x)<0$. Indeed, equation~\eqref{eqstatsys} implies that $\chi'(x_0)=-\alpha b\varphi(x_0)<0$. Hence, $\chi(x)<0$ for all $x>x_0$ sufficiently close to $x_0$. 
Assume, by contradiction, that there is $\tilde x\in ]x_0,+\infty[$ such that $\chi(x)<0$ for all $x\in ]x_0,\tilde x[$ and $\chi(\tilde x)=0$. As a consequence, $\varphi'(x)<0$ for all $x\in ]x_0,\tilde x[$ and 
$$
\varphi^2(\tilde x)\frac{b-\frac{a}{\alpha+1}\varphi^{2\alpha}(\tilde x)}{1-\varphi^{2\alpha}(\tilde x)}=0.
$$
This implies $\varphi(\tilde x)=0$, a contradiction.

With the same argument, we show $\chi(x)>0$ for any $x<x_0$ and we conclude that $\varphi(x)>0$ for all $x\in\R$. Moreover, $\varphi$ is strictly decreasing for all $x>x_0$ and strictly increasing for all $x<x_0$.
As a consequence $\max_{x\in\R} \varphi(x)=\varphi(x_0)=\left(\frac{(\alpha+1)b}{a}\right)^{\frac{1}{2\alpha}}$. 

Finally, we know that $(\chi, \varphi)$ has to converge to one of the critical points of $H$ as $x$ goes to $\pm \infty$. On the one hand, the points $(\pm\sqrt{\frac{a-b}{\alpha}},1)$ and $(\pm\sqrt{\frac{a-b}{\alpha}},-1)$ are excluded since $0\le \lim_{x\to\pm \infty}\varphi(x)\le \varphi(x_0)<1$. On the other hand, $H(0,\pm \left(\frac{b}{a}\right)^{1/2\alpha})<0=H(\chi_0,\varphi_0)$. Hence $(\chi, \varphi)$ has to converge to $(0,0)$ as $x$ goes to $\pm \infty$.

Next, we have to prove that $\varphi$ is symmetric with respect to $\{x=x_0\}$, \emph{i.e} $\varphi(2x_0-x)=\varphi(x)$ for all $x\in \R$. For this, it is enough to remark that $(\chi_{x_0}(x),\varphi_{x_0}(x)):=(-\chi(2x_0-x),\varphi(2x_0-x))$ is a solution to~\eqref{eqstatsys} that satisfies $\chi_{x_0}(x_0)=0$ and $\varphi_{x_0}(x_0)=\varphi_0$. As a consequence $(\chi_{x_0}(x),\varphi_{x_0}(x))=(\chi(x),\varphi(x))$ for all $x\in \R$.


\end{proof}

\begin{rem}  As shown in Section~\ref{secgroundstates}, a straightforward computation leads to the explicit formula for $\varphi$. In particular, for any fixed $x_0\in \R$, 
\begin{equation}\label{solstatexplicit}
\varphi(x)=\left(\frac{1}{2}\left(\frac{a}{(\alpha+1)b}+1\right)+\frac{1}{2}\left(\frac{a}{(\alpha+1)b}-1\right)\cosh({2\alpha\sqrt{b}(x-x_0)})\right)^{-\frac{1}{2\alpha}}
\end{equation}
for $x\ge x_0$ and $\varphi(x)=\varphi(2x_0-x)$ for $x<x_0$.
\end{rem}

Finally, we conclude by proving the non-degeneracy of the solution $\varphi$. The linearized operator at our solution $\varphi$ is defined by 
\begin{align}\label{linearizedphi}
Lv=&-\left(\frac{v'}{1-\varphi^{2\alpha}}+\left(\frac{\varphi'}{1-\varphi^{2\alpha}}\right)\frac{2\alpha\varphi^{2\alpha-1}v}{1-\varphi^{2\alpha}}\right)'\nonumber\\
&+2\alpha\left(\frac{\varphi'}{1-\varphi^{2\alpha}}\right)\varphi^{2\alpha-1}\left(\frac{v'}{1-\varphi^{2\alpha}}+\left(\frac{\varphi'}{1-\varphi^{2\alpha}}\right)\frac{2\alpha\varphi^{2\alpha-1}v}{1-\varphi^{2\alpha}}\right)\nonumber\\
&+\alpha(2\alpha-1)\left(\frac{\varphi'}{1-\varphi^{2\alpha}}\right)^2\varphi^{2\alpha-2}v-a(2\alpha+1)\varphi^{2\alpha}v+b v.
\end{align}
Our goal is to prove that in $L^2$, $\ker L=\mathrm{span}\{\varphi'\}$.

\begin{lem}\label{nondegeneracy} Let $\alpha\in \N^*$ and $a,b>0$ such that $a>(\alpha+1)b$ and $\varphi$ the unique solution of~\eqref{eqmodstat} that satisfies $0<\varphi(x)<1$ for all $x\in \R$ and
$\lim_{x\to \pm \infty} \varphi(x)=0$. Then $\varphi$ is non-degenerate, \emph{i.e.} $\ker L=\mathrm{span}\{\varphi'\}$.
\end{lem}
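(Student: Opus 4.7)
The plan is to exploit translation invariance to produce one explicit element of the kernel, and then use an asymptotic analysis of the linear second-order ODE $Lv=0$ to rule out any other $L^2$-solutions. First I would observe that $\varphi'\in\ker L$: differentiating \eqref{eqmodstat} with respect to $x$ reproduces $L\varphi'=0$ (equivalently, $L$ is the Hessian at $\varphi$ of the associated energy functional, and translation invariance of \eqref{eqmodstat} places $\varphi'$ in its kernel). By points~(i) and~(iv) of Theorem~\ref{existence}, $\varphi$ is smooth (since $\varphi<1$ everywhere keeps the equation nondegenerate) and $\varphi(x)\le Ce^{-\sqrt{b}|x|}$ at infinity; plugging this back into \eqref{eqmodstat}, or using the first integral \eqref{eqmodstatint}, gives the analogous bound $|\varphi'(x)|\le C'e^{-\sqrt{b}|x|}$. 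Hence $\varphi'\in L^2(\R)\setminus\{0\}$ and $\mathrm{span}\{\varphi'\}\subset\ker L\cap L^2(\R)$, so the remaining task is the reverse inclusion.

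Next I would recast $Lv=0$ as a genuine second-order ODE in normal form. Since $1-\varphi^{2\alpha}$ is smooth, strictly positive, and bounded, multiplying through by it and reorganising terms brings $Lv=0$ to the shape
\[
-v''+bv=R(x)v'+S(x)v,
\]
where $R$ and $S$ are smooth on $\R$ and decay exponentially at $\pm\infty$: each of them is a rational expression in $\varphi$, $\varphi^{2\alpha-1}$, $\varphi'$, and $(1-\varphi^{2\alpha})^{-1}$, so the exponential decay of $\varphi$ and $\varphi'$ forces the exponential decay of $R$ and $S$. The solution space of this ODE in $C^2(\R)$ is therefore two-dimensional.

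Standard asymptotic theory (Levinson's theorem, or a direct Picard iteration on the Volterra integral equation obtained by inverting $-d^2/dx^2+b$ against its exponential Green's function) then provides, near $+\infty$, a fundamental system $\{v_+,v_-\}$ with $v_\pm(x)\sim e^{\pm\sqrt{b}x}$, and similarly near $-\infty$. Consequently the subspace of solutions of $Lv=0$ that are square-integrable at $+\infty$ is one-dimensional, spanned by $v_-$; the analogous statement holds at $-\infty$. Hence $\ker L\cap L^2(\R)$ has dimension at most one, and combined with the translation-invariance step this yields $\ker L\cap L^2(\R)=\mathrm{span}\{\varphi'\}$. The main obstacle will be this asymptotic step: rigorously upgrading the heuristic ``the limiting equation at infinity is $-v''+bv=0$'' to an honest pair of linearly independent solutions of the full perturbed equation with the correct exponential behaviour. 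This is where the exponential decay of $R$ and $S$ is essential—it provides the integrability of the perturbation at infinity that is the hypothesis of Levinson's theorem and that ensures convergence of the Picard iterates; everything else (identifying $\varphi'\in\ker L$, the exponential decay and $L^2$-membership of $\varphi'$) is essentially bookkeeping built on Theorem~\ref{existence}.
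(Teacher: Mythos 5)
Your proposal is correct, but it takes a genuinely different route from the paper's. You put $Lv=0$ into the normal form $-v''+bv=Rv'+Sv$ with exponentially decaying coefficients and then run a Levinson/Volterra argument to show that the solutions square-integrable at $+\infty$ form a one-dimensional subspace of the two-dimensional solution space; since $\varphi'$ is a nonzero element of $\ker L\cap L^2(\R)$, the kernel must equal $\mathrm{span}\{\varphi'\}$. The paper instead rewrites $Lv=0$ as the linearization \eqref{eqstatsyslin} of the first-order Hamiltonian system \eqref{eqstatsys} and exploits the conserved Wronskian $u\varphi'-v\chi'$ between a putative kernel element $(u,v)$ and the explicit solution $(\chi',\varphi')$: evaluating at the symmetry point $x_0$, where $\varphi'(x_0)=0$ and $\chi'(x_0)=-\alpha b\varphi(x_0)\neq0$, either $v(x_0)=0$, so the Wronskian vanishes identically and $(u,v)$ is proportional to $(\chi',\varphi')$, or the Wronskian is a nonzero constant, which is incompatible with $(u,v)$ tending to $(0,0)$ at infinity. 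The paper's argument is shorter and avoids asymptotic ODE theory, but it tacitly uses that an $L^2$ kernel element decays pointwise at infinity, which is precisely what your Levinson step furnishes explicitly; conversely, your argument ignores the Hamiltonian structure and the symmetry point entirely, so it is more robust and would apply to any linearization with the limiting operator $-\partial_x^2+b$ at infinity. Two small points to tidy up in your write-up: justify that an $L^2$ (distributional) element of $\ker L$ is a classical $C^2$ solution, so that the two-dimensional solution-space count applies to it (immediate here since the leading coefficient $(1-\varphi^{2\alpha})^{-1}$ is smooth and bounded away from zero), and note that the coefficient rewriting requires expressing $\varphi''$ through equation \eqref{eqmodstat} to see that all perturbative coefficients indeed decay exponentially.
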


\begin{proof} First of all, by deriving the equation~\eqref{eqmodstat} with respect to $x$, we can easily remark that $\varphi'$ is a solution to $Lv=0$. As a consequence $\mathrm{span}\{\varphi'\}\subset \ker L$.

Let $v\in \ker L\smallsetminus \mathrm{span}\{\varphi'\}$. As a consequence, by writing $u=\frac{v'}{1-\varphi^{2\alpha}}+\left(\frac{\varphi'}{1-\varphi^{2\alpha}}\right)\frac{2\alpha\varphi^{2\alpha-1}v}{1-\varphi^{2\alpha}}$, we deduce that $v$ is a solution to

\begin{equation}\label{eqstatsyslin}
\left\{
\begin{aligned}
&u'=2\alpha\chi\varphi^{2\alpha-1}u+\alpha(2\alpha-1) \chi^2\varphi^{2\alpha-2}v-a(2\alpha+1)\varphi^{2\alpha}v+b v\\
&v'= (1-\varphi^{2\alpha})u-2\alpha\chi\varphi^{2\alpha-1}v
\end{aligned}
\right.
\end{equation}
with $\chi=\frac{\varphi'}{1-\varphi^{2\alpha}}$. Note 
that~\eqref{eqstatsyslin} is simply the linearization 
of~\eqref{eqstatsys} at the solution $(\chi,\varphi)$, 
$(\chi',\varphi')$ is a solution to~\eqref{eqstatsyslin} and the 
non-degeneracy property will follow from the fact $(0,0)$ is a non-degenerate minimum of $H$. More precisely, a direct computation, using the fact that $(u,v)$ and $(\chi',\varphi')$ are solutions to~\eqref{eqstatsyslin}, gives $(u\varphi' - v\chi')'=0$. Hence, the function $u\varphi' - v\chi'$ is constant and in particular 
$u(x)\varphi'(x) - v(x)\chi'(x)=u(x_0)\varphi'(x_0) - v(x_0)\chi'(x_0)=  - v(x_0)\chi'(x_0)$ with $x_0$ defined as in the proof of Lemma~\ref{existencesolstat} so that $\chi'(x_0)=-\alpha b\varphi(x_0)<0$.
If $v(x_0)=0$, then $u\varphi' - v\chi'\equiv 0$ and using the definition of $u$ we can conclude $v\equiv 0$.
If $v(x_0)>0$ (resp. $v(x_0)<0$), then $u(x)\varphi'(x) - v(x)\chi'(x)=\alpha b\varphi(x_0)v(x_0)>0$ (resp. $u(x)\varphi'(x) - v(x)\chi'(x)=\alpha b\varphi(x_0)v(x_0)<0$) and $(u,v)$ cannot converge to $(0,0)$ as $x$ goes to $\pm\infty$. Hence, $v\notin \ker L$.

\end{proof}

\end{appendix}


\end{document}